\documentclass[12pt]{amsart}
\usepackage{amsmath,amsfonts,textcomp,longtable,pstricks,xypic}
\usepackage[dvips]{graphicx}

\textwidth =6.8in \textheight = 9.8in \topmargin= -0.3in
\oddsidemargin = -0.3in \evensidemargin = -0.3in
\setlength{\parskip}{1pt plus 1pt minus 1pt}

\emergencystretch=30pt
\tolerance=1000

\newtheorem{thm}{Theorem }[section]

\newtheorem{lemma}[thm]{Lemma }

\newtheorem{prop}[thm]{Proposition }

\newtheorem{corollary}[thm]{Corollary }

\theoremstyle{definition}
\newtheorem{deff}[thm]{Definition }

\newtheorem{rem}[thm]{Remark }

\newtheorem{ex}[thm]{Example }

\def\RR{{\mathbb R}}

\def\QQ{{\mathbb Q}}
\def\ZZ{{\mathbb Z}}
\def\NN{{\mathbb N}}

\def\FF{{\mathbb F}}
\def\fq{{\mathbb{F}_q}}
\def\kk{{k^{alg}}}

\def\SS{{\bar{S}}}

\def \bra#1\ket {\mathop{\vphantom{#1}\left<\smash{#1}\right>}\nolimits}

\DeclareMathOperator{\End}{End}

\DeclareMathOperator{\tr}{tr}

\DeclareMathOperator{\Gal}{Gal}
\DeclareMathOperator{\Spec}{Spec}

\DeclareMathOperator{\rk}{rk}

\DeclareMathOperator{\Ht}{Ht}
\DeclareMathOperator{\diag}{diag}

 \DeclareMathOperator{\Np}{Np}
\DeclareMathOperator{\Hp}{Yp}

\def\T{\Lambda}

\def\OO{\mathcal{O}}

\def\Fr{F}

\def\ch{\mathop{\mathrm{char}}\nolimits}
\def\plim{\mathop{{\lim\limits_{\longleftarrow}}}\nolimits}

\renewcommand \phi {\varphi}
\renewcommand \rho {\varrho}

\begin{document}
\author{Sergey Rybakov}
\thanks{The author is partially supported by AG Laboratory GU-HSE, RF government grant, ag. 11 11.G34.31.0023, 
and by RFBR grants no. 11-01-12072, 11-01-00395 and 10-01-93110-CNRSLa}
\thanks{E-mail address: rybakov@mccme.ru, rybakov.sergey@gmail.com}
\address{Poncelet laboratory (UMI 2615 of CNRS and Independent University of
Moscow)}
\address{Institute for information transmission problems of the Russian Academy of Sciences}
\address{Laboratory of Algebraic Geometry, NRU HSE, 7 Vavilova Str., Moscow, Russia, 117312}
\email{rybakov@mccme.ru, rybakov.sergey@gmail.com}%
\title[Finite group subschemes of abelian varieties]
{Finite group subschemes of abelian varieties\\ over finite
fields}
\date{}
\keywords{abelian variety, finite field, Weil polynomial, Newton polygon, Young
polygon}

\subjclass{14K99, 14G05, 14G15}

\begin{abstract}
Let $A$ be an abelian variety over a finite field $k$. The
$k$-isogeny class of $A$ is uniquely determined by the Weil
polynomial $f_A$. We assume that $f_A$ is separable. For
a given prime number $\ell\neq\ch k$ we give a classification of
group schemes $B[\ell]$, where $B$ runs through the isogeny class,
in terms of certain Newton polygons associated to $f_A$. As an
application we classify zeta functions of Kummer surfaces over
$k$.
\end{abstract}
\maketitle

\section{Introduction.}
Throughout this paper $k$ is a finite field $\fq$ of characteristic $p$, and $\kk$ is an algebraic closure of $k$. Let $A$ be an abelian variety of dimension $g$ over $k$. Let
$A[m]$ be the group subscheme of $A$ annihilated by a natural number $m$. Fix a prime
number $\ell\neq p$. We say that $A[\ell]$ is the {\it
$\ell$-torsion of $A$}. In this paper we classify
$\ell$-torsion of abelian varieties in two cases: when the Weil polynomial is separable, and for abelian surfaces. This result is similar to the classification of groups of
$k$-points $A(k)$ (see~\cite{Ry4}). These two problems are closely
related, but the former one seems to be easier. 

Denote by $A_m=A[m](\kk)$ the kernel of multiplication by $m$ in
$A(\kk)$. Let $T_\ell(A) = \plim A_{\ell^r}$ be the $\ell$-th Tate module of $A$, and let $V_\ell(A)=T_\ell(A)\otimes_{\ZZ_\ell}\QQ_\ell$ be the
corresponding vector space over $\QQ_\ell$. Then $T_\ell(A)$ is a
free $\ZZ_\ell$-module of rank $2g$. The Frobenius endomorphism
$F$ of $A$ acts on the Tate module by a semisimple linear
transformation, which we also denote by $F: T_\ell(A)\to T_\ell(A)$. The
characteristic polynomial
$$
f_A(t) = \det(t-F)
$$
is called {\it the Weil polynomial of $A$}. It is a monic
polynomial of degree $2g$ with rational integer coefficients
independent of the choice of prime $\ell$. It is well known that
for isogenous varieties $A$ and $B$ we have $f_A(t)=f_B(t)$. Tate
proved that the isogeny class of an abelian variety is determined
by its characteristic polynomial, that is $f_A(t)=f_B(t)$ implies
that $A$ is isogenous to $B$~\cite{Ta66}.

This gives a nice description of isogeny classes of abelian
varieties over $k$ in terms of Weil polynomials. It seems natural
to consider classification problems concerning abelian varieties
inside a given isogeny class. Our goal is to describe the Frobenius action on
$\ell$-torsion of abelian varieties in a given isogeny class in terms of corresponding Weil polynomial.
Since $A[\ell](\kk)$ is an $\FF_\ell$-vector space, we have to describe possible matrices of the Frobenius action on such vector spaces.

In the second section we reduce the problem to a particular linear algebra question.
Here is a simplified version of the question. Let $N$ be a
nilpotent $d\times d$ matrix over $\FF_\ell$, and let $Q
\in\ZZ_\ell[t]$ be a polynomial of degree $d$ such that $Q\equiv t^d\bmod\ell$. Is it possible to find a
matrix $M$ over $\ZZ_\ell$ such that the characteristic polynomial
of $M$ is $Q$, and $M\equiv N\bmod\ell$? We will refer to this
question as {\it lifting of the nilpotent matrix $N$ to $\ZZ_\ell$
with respect to $Q$}.

The main results of the paper are proved in section $3$. First we associate to a nilpotent matrix $N$ a polygon of
special type. Let $m_1\geq\dots\geq m_r$ be the dimensions of the
Jordan cells of $N$. The numbers $m_1,\dots,m_r$ determine the
matrix up to conjugation. {\it The Young  polygon} $\Hp(N)$ of $N$
is the convex polygon with vertices $(\sum_{j=1}^{i}m_j,i)$ for
$0\leq i\leq r$. For a polynomial $Q\in\ZZ[t]$ we denote by
$\Np_\ell(Q)$ the Newton polygon of $Q$ with respect to $\ell$
(see Section $3$ for a precise definition). Assume that $Q$ is separable.
The main result of section $3$ can be reformulated as follows: one can lift $N$ to
$\ZZ_\ell$ with respect to $Q$ if and only if $\Np(Q)$ lies on or
above $\Hp(N)$ (see Theorems~\ref{main1} and~\ref{main2}). This
result allows one to classify $\ell$-torsion of abelian varieties
belonging to an isogeny class corresponding to the Weil polynomial
without multiple roots (Corollaries~\ref{main_av_tors1} and~\ref{main_av_tors2}).

In section $4$ we establish a relationship between Young polygons
for the Frobenius actions on an abelian variety and its dual. We
also treat the following question due to B. Poonen: is it true
that for an abelian surface $A$ the group of $k$-rational points
$A(k)$ is isomorphic to the the group of $k$-rational points
$\widehat A(k)$ on its dual? The answer is no, and we give a
counterexample.

In section $5$ we prove that (generalized) matrix factorizations correspond to Tate modules. 
This technique turns out to be useful when Weil polynomial is not separable.
In section $6$ we explicitly classify $\ell$-torsion of abelian
surfaces. In the final section we apply this result to the
classification of zeta functions of Kummer surfaces.

{\bf Acknowledgements.}
I am deeply grateful to Alexander Kuznetsov, who communicated his unfinished results on zeta functions of Kummer surfaces to me and provided many useful corrections on the early version of the paper. 
I thank Michael A.~Tsfasman for his attention to this work. 
I am grateful to Alexey Zykin and referees for suggesting many useful corrections and comments on the paper.

\section{Preliminaries}\label{end_rings}
\subsection*{Finite group subschemes of abelian varieties.}
A finite \'etale group scheme $G$ over $k$ is
uniquely determined by the Frobenius action on $G(\kk)$
(see~\cite{De}). If $\ell\cdot G=0$, then $G(\kk)$ is an
$\FF_\ell$-vector space and Frobenius action is $\FF_\ell$-linear.
By definition of the Tate module, we have $A[\ell](\kk)\cong
T_\ell(A)/\ell T_\ell(A)$. Thus the structure of a group scheme on
$A[\ell]$ depends only on the module structure on $T_\ell(A)$ over
$R=\ZZ_\ell[\Fr]\subset\End_k(A)$. Moreover, since the action of $F$ on
$V_\ell(A)$ is semisimple, $f_A$ determines the $R$-module $V_\ell(A)$ uniquely up to isomorphism. 
 
The following lemma shows what $R$-modules can arise as Tate
modules of varieties from a fixed isogeny class.

\begin{lemma}~\cite[IV.2.3]{Milne}
\label{lem_on_Tate_module} If $f:B\to A$ is an isogeny then,
$T_\ell(f):T_\ell(B)\to T_\ell(A)$ is an embedding of $R$-modules,
and if $T$ denotes its image then
\begin{equation}\label{t}
F(T)\subset T\quad\text{and}\quad T\otimes_{\ZZ_\ell}\QQ_\ell
\cong T_\ell(A)\otimes_{\ZZ_\ell}\QQ_\ell.
\end{equation}
Conversely, if $T\subset T_\ell(A)$ is a $\ZZ_\ell$-submodule such
that $(\ref{t})$ holds, then there exists an abelian variety $B$
defined over $k$ and an isogeny $f:B\to A$ such that $T_\ell(f)$
induces an isomorphism $T_\ell(B)\cong T$.\qed
\end{lemma}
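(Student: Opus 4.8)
This is the standard correspondence between isogenies $B\to A$ and full-rank, $F$-stable sublattices of $T_\ell(A)$, and I would prove the two implications separately, following the argument of \cite[IV.2.3]{Milne}.

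\emph{Direct assertion.} Let $f\colon B\to A$ be an isogeny. Its kernel is a finite group scheme and $\ell\neq p$, so the $\ell$-power torsion of $(\ker f)(\kk)$ is killed by some $\ell^{s}$. If $v=(v_m)\in T_\ell(B)$ satisfies $T_\ell(f)(v)=0$, then every $v_m$ lies in $(\ker f)(\kk)$; since $v_{m+s}$ is then an $\ell$-power-torsion element of $(\ker f)(\kk)$, it is killed by $\ell^{s}$, and the transition relation $v_m=\ell^{s}v_{m+s}$ gives $v_m=0$, so $T_\ell(f)$ is injective. Because $f$ is a morphism over $k$ it commutes with $F$, hence with all of $R=\ZZ_\ell[F]$, so the image $T$ is an $R$-submodule with $F(T)\subset T$; and $V_\ell(f)$ is an isomorphism since $\ker f$ is finite, which gives $T\otimes_{\ZZ_\ell}\QQ_\ell\cong V_\ell(A)$. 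This proves $(\ref{t})$.

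\emph{Converse.} A submodule $T\subset T_\ell(A)$ satisfying $(\ref{t})$ has $\ZZ_\ell$-rank $2g$, hence finite index in $T_\ell(A)$; choose $n$ with $\ell^{n}T_\ell(A)\subseteq T$. Then $T/\ell^{n}T_\ell(A)$ is an $F$-stable subgroup of $T_\ell(A)/\ell^{n}T_\ell(A)\cong A_{\ell^{n}}$, and since $A[\ell^{n}]$ is \'etale (as $\ell\neq p$) the equivalence of \cite{De} yields a finite subgroup scheme $H\subseteq A[\ell^{n}]$ over $k$ with $H(\kk)=T/\ell^{n}T_\ell(A)$. As $H\subseteq\ker[\ell^{n}]_A$, multiplication by $\ell^{n}$ on $A$ factors as $A\xrightarrow{\,q\,}A/H\xrightarrow{\,f\,}A$ with $B:=A/H$, and dually $q\circ f=[\ell^{n}]_B$. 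I take this $f$ as the required isogeny, and it remains to show that its image is exactly $T$.

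\emph{The main point.} This last identification is the only real difficulty, and I would carry it out inside $V:=V_\ell(A)$. Using the isomorphism $V_\ell(q)$ to identify $V_\ell(B)$ with $V$, one has $T_\ell(A)\subseteq T_\ell(B)$, and $f\circ q=[\ell^{n}]_A$ turns $T_\ell(f)$ into multiplication by $\ell^{n}$ on $V$. Passing to $\ell$-power torsion in the exact sequence $0\to H(\kk)\to A(\kk)\to B(\kk)\to 0$ gives $B[\ell^{\infty}](\kk)\cong A[\ell^{\infty}](\kk)/H(\kk)$, where $A[\ell^{\infty}]=\bigcup_m A[\ell^{m}]$; since $A[\ell^{\infty}](\kk)=V/T_\ell(A)$ and $B[\ell^{\infty}](\kk)=V/T_\ell(B)$, this forces $T_\ell(B)$ to be the preimage in $V$ of $H(\kk)$. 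But $H(\kk)=T/\ell^{n}T_\ell(A)$ corresponds to $\ell^{-n}T/T_\ell(A)$ under the inclusion $A[\ell^{n}](\kk)=T_\ell(A)/\ell^{n}T_\ell(A)\hookrightarrow V/T_\ell(A)$, so $T_\ell(B)=\ell^{-n}T$, and applying multiplication by $\ell^{n}$ yields $T_\ell(f)\bigl(T_\ell(B)\bigr)=T$. Together with the injectivity of $T_\ell(f)$ and the fact that $f$ respects the $F$-action, this gives the asserted isomorphism $T_\ell(B)\cong T$ of $R$-modules. The persistent source of care is the identification $V_\ell(A)/T_\ell(A)\cong A[\ell^{\infty}](\kk)$ and the way a subgroup of $A_{\ell^{n}}$ sits inside it; everything else is the formal functoriality of $T_\ell$ and $V_\ell$ under isogenies.
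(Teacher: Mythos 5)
The paper does not prove this lemma: it is stated with a citation to Milne~\cite[IV.2.3]{Milne} and a tombstone, and no argument is supplied in the text. Your proposal is a correct, self-contained proof, and it is essentially the standard argument of that reference (going back to Tate and Waterhouse): for the forward direction, injectivity of $T_\ell(f)$ from finiteness of the $\ell$-primary part of $\ker f$ together with the rank count giving full rank; for the converse, passing from a full-rank $F$-stable lattice $T\subset T_\ell(A)$ to a finite $k$-subgroup scheme $H\subset A[\ell^n]$ via the \'etale-group-scheme equivalence of~\cite{De}, and taking $B=A/H$. The one place that genuinely requires bookkeeping --- and you isolate it correctly --- is the identification of the image of $T_\ell(f)$ with $T$; your route through $V_\ell(\cdot)/T_\ell(\cdot)\cong(\cdot)[\ell^\infty](\kk)$ and the observation that the inclusion $A_{\ell^n}=T_\ell(A)/\ell^n T_\ell(A)\hookrightarrow V/T_\ell(A)$ carries $T/\ell^n T_\ell(A)$ to $\ell^{-n}T/T_\ell(A)$ is exactly right. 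One small implicit point worth flagging: the \'etale correspondence requires $T/\ell^n T_\ell(A)$ to be stable under the full Galois group, not merely under $F$; this is automatic here because $F$ is invertible on $T_\ell(A)$ (its determinant is $q^g$, a unit in $\ZZ_\ell$), so $F$-stability of a finite subquotient already gives $F^{-1}$-stability.
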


\subsection*{Generalized Jordan form}
Let $K$ be a field, and let $\lambda$ be an algebraic number over $K$.
Put $L=K(\lambda)$. Take a vector space $L^r$ with a natural basis $v_1,\dots,v_r$. Let $M:L^r\to L^r$ be a linear transformation such that its matrix is a sum of Jordan cells with eigenvalue $\lambda$, i.e. $M=\lambda I_r+N$, where $I_r$ is the identity matrix and $N$ is a nilpotent matrix of dimension $r$. The set $\{\lambda^jv_i|1\leq i\leq r, 0\leq j\leq n-1\}$ is a basis of $L^r$ as a $K$-vector space. Denote by $J(\lambda, N)$ the matrix of $M$ in this basis. It is called {\it generalized Jordan cell}. We have the following generalization of the Jordan decomposition theorem. 

\begin{thm}\label{JF}
Let $M$ be a linear transformation of a $K$-vector space $V$ with the characteristic polynomial $P$. Suppose that any irreducible divisor of $P$ is separable. Let $\Delta$ be the set of roots of $P$, and $\T\subset\Delta$ be the image of a section of the natural map $\Delta\to\Delta/\Gal(K^{sep}/K)$, i.e. for any root $\delta\in\Delta$ there exists a unique $\lambda\in\T$ which is conjugate to $\delta$. Then there exists a basis of $V$ such that the matrix of $M$ is a direct sum of generalized Jordan cells $J(\lambda,N_\lambda)$ for $\lambda\in\T$. This data determines $M$ uniquely up to isomorphism over $K$.
\end{thm}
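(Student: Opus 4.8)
The plan is to reduce, via a primary decomposition, to the case of a single irreducible factor of $P$, then to recognize the relevant quotient of $K[t]$ as a truncated polynomial ring over $L=K(\lambda)$ --- this is the step that uses separability --- and finally to apply the structure theorem for finitely generated modules over a principal ideal domain. For the reduction, write $P=\prod_{\lambda\in\T}P_\lambda^{e_\lambda}$, where $P_\lambda\in K[t]$ is the minimal polynomial of $\lambda$ over $K$; by hypothesis each $P_\lambda$ is irreducible and separable, and distinct $P_\lambda$ are coprime because they correspond to distinct $\Gal(K^{sep}/K)$-orbits. Viewing $V$ as a $K[t]$-module through $t\mapsto M$, the relation $P(M)=0$ together with the Chinese Remainder Theorem gives a canonical $M$-stable decomposition $V=\bigoplus_{\lambda\in\T}V_\lambda$ with $V_\lambda=\ker P_\lambda(M)^{e_\lambda}$, so it suffices to treat each $M|_{V_\lambda}$ separately; thus we may assume $\T=\{\lambda\}$, $P=P_\lambda^{e}$ and $V=V_\lambda$, and we abbreviate $P_\lambda$ to $P$ and $M|_{V_\lambda}$ to $M$.

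Consider $R=K[t]/(P^{e})$, an Artinian local $K$-algebra with nilpotent maximal ideal $\mathfrak m=(P)/(P^{e})$ and residue field $R/\mathfrak m=K[t]/(P)=L$, and note that $V$ is an $R$-module. Since $P$ is separable, $P'$ does not vanish at the residue class $\bar t\in L$ of $t$, so by Hensel's lemma (applicable since $R$ is $\mathfrak m$-adically complete, being Artinian local) there is a unique $\tilde\lambda\in R$ with $P(\tilde\lambda)=0$ and $\tilde\lambda\equiv\bar t\bmod\mathfrak m$. As $P$ is irreducible over $K$, the subring $K[\tilde\lambda]\subset R$ is a copy of $L$; identifying $L$ with $K(\lambda)\subset K^{sep}$ via $\bar t\leftrightarrow\lambda$, this makes $R$ an $L$-algebra with $\lambda$ acting as $\tilde\lambda$. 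Put $s=t-\tilde\lambda\in\mathfrak m$. A Taylor expansion gives $0=P(t)=P(\tilde\lambda)+P'(\tilde\lambda)s+\cdots=us$, where $u\in R$ reduces to $P'(\bar t)\in L^{\times}$ modulo $\mathfrak m$ and is therefore a unit; hence $P(t)^{e}=0$ reads $s^{e}=0$, and comparing $K$-dimensions ($\dim_K R=e[L:K]$) shows $R\cong L[s]/(s^{e})$ with $t\mapsto\lambda+s$.

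Now $V$ is a finitely generated $L[s]$-module annihilated by $s^{e}$, so by the structure theorem over the principal ideal domain $L[s]$ we get $V\cong\bigoplus_{i=1}^{r}L[s]/(s^{m_i})$ with $e\ge m_1\ge\cdots\ge m_r\ge1$. In each summand the $L$-basis $1,s,\dots,s^{m_i-1}$ displays multiplication by $s$ as a nilpotent Jordan block of size $m_i$; writing $N_\lambda$ for the resulting nilpotent $L$-linear operator on $V$, we have $M=\lambda\,\id+N_\lambda$. Expanding the chosen $L$-basis into a $K$-basis by means of $1,\lambda,\dots,\lambda^{n-1}$ with $n=[L:K]$, exactly as in the definition of $J(\lambda,N_\lambda)$, presents the $K$-matrix of $M$ as the generalized Jordan cell $J(\lambda,N_\lambda)$; reassembling over $\lambda\in\T$ gives the asserted basis of $V$. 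For uniqueness, every ingredient is intrinsic to $(V,M)$: $V_\lambda$ is the $P_\lambda$-primary component of the $K[t]$-module $V$, its $L$-structure comes from the canonical root $\tilde\lambda\in K[M|_{V_\lambda}]$, and $N_\lambda=M|_{V_\lambda}-\lambda$ is a nilpotent $L$-linear operator whose conjugacy class over $L$ --- equivalently the multiset $m_1\ge\cdots\ge m_r$ --- is an isomorphism invariant (a $K$-isomorphism $(V,M)\cong(V',M')$ restricts to the primary components, is $L$-linear since it carries $\tilde\lambda$ to $\tilde\lambda'$ by uniqueness of the Hensel lift, and hence conjugates $N_\lambda$ to $N'_\lambda$); conversely, conjugating each $N_\lambda$ over $L$ induces a $K$-conjugation of the cells, so this data determines $M$ up to $K$-isomorphism.

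The crux is the middle step: exhibiting a copy of the residue field $L$ inside $R$, i.e. Hensel-lifting the root $\lambda$ of $P$ along the nilpotent surjection $R\twoheadrightarrow L$. This is precisely where separability is indispensable --- for an inseparable irreducible factor one has $P'(\bar t)=0$, Hensel's lemma is unavailable, $R$ need not contain its residue field at all, and both the normal form and its uniqueness break down.
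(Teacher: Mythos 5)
Your proposal is correct and takes essentially the same route as the paper: primary decomposition of $V$ via the Chinese Remainder Theorem, then exhibiting a copy of the residue field $L=K(\lambda)$ inside the Artinian local quotient $K[t]/(P_\lambda^{e_\lambda})$ (the step where separability is used), then Jordan theory over $L$. The only cosmetic difference is that you produce the embedding $L\hookrightarrow R$ by Hensel-lifting the root $\lambda$ along the nilpotent surjection $R\twoheadrightarrow L$, whereas the paper invokes the infinitesimal lifting criterion for the smooth morphism $\Spec\overline L_\lambda\to\Spec K$ (EGA IV 17.5.1, Hartshorne II Ex.~8.6) to obtain the section $\psi_\lambda$ --- two formulations of the same fact.
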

\proof
Let $P=\prod_{\lambda_\in\T} P_\lambda^{d_\lambda}$ be the decomposition of $P$ into a product of monic irreducible separable polynomials $P_\lambda\in K[t]$ such that $P_\lambda(\lambda)=0$ for any $\lambda\in\T$. Then by the Chinese remainder theorem $$\overline R=K[t]/P(t)K[t]\cong\prod_{\lambda\in\T} K[t]/P_\lambda(t)^{d_\lambda}K[t].$$
The vector space $V$ is an $\overline R$-module such that the image of $t$ in $\overline R$ acts on $V$ as $M$.
Put $\overline L_\lambda=K[t]/P_\lambda(t)K[t]$, and $\overline R_\lambda=K[t]/P_\lambda(t)^{d_\lambda}K[t]$. It follows that $V\cong\oplus V_\lambda$, where $V_\lambda=\overline R_\lambda V$ is an $\overline R_\lambda$-module. For any $\lambda\in\T$ the polynomial $P_\lambda$ is separable, thus $\Spec \overline L_\lambda$ is smooth over $\Spec K$. By~\cite[17.5.1]{ega4} (see also~\cite[II. exercise 8.6]{Ha}) we can find a section $\psi_\lambda$ of the natural morphism $\phi_\lambda:\overline R_\lambda\to \overline L_\lambda$, i.e.,  $\overline R_\lambda$ is an $\overline L_\lambda$-algebra, and $V_\lambda$ has a structure of an $\overline L_\lambda$-vector space. Denote by $t_\lambda\in \overline R_\lambda$ the image of $t$ under the natural projection $K[t]\to \overline R_\lambda$. Then $\lambda=\phi_\lambda(t_\lambda)$, and
$n_\lambda=t_\lambda-\psi_\lambda(\lambda)\in \overline R_\lambda$ is in the kernel of $\phi_\lambda$. Thus $n_\lambda^{d_\lambda}=0$, i.e. $n_\lambda$ acts on $V_\lambda$ as a nilpotent matrix $N_\lambda$. We see that $t_\lambda$ acts on $V_\lambda$ as a generalized Jordan cell $J(\lambda,N_\lambda)$. 

Finally, we have to prove that if $\oplus_{\lambda\in\T}J(\lambda,N_\lambda)$ is conjugate to $\oplus_{\lambda\in\T}J(\lambda,N'_\lambda)$ over $K$, then for all $\lambda\in\T$ the matrix $N_\lambda$ is conjugate to $N'_\lambda$ over $\overline L_\lambda$. Indeed, these matrices have the same dimension $d_\lambda$, and are conjugate by the Jordan decomposition theorem over $\overline L_\lambda$.
\qed

\begin{rem}\label{rem1}
We proved that $\overline R_\lambda\cong \overline L_\lambda[t]/(t-\lambda)^{d_\lambda}\overline L_\lambda[t]$. We use this isomorphism later.
\end{rem}

\subsection*{Reduction step 1.}
For a polynomial $P\in\ZZ_\ell[t]$ denote by $\Bar P\in\FF_\ell[t]$ its reduction modulo $\ell$, and by $P_1\in\ZZ_\ell[t]$ the unitary separable polynomial with the same set of roots as $P$. We call $P_1$ \emph{the minimal polynomial} of $P$.
The Frobenius action on $V_\ell(A)$ is semisimple, thus the minimal polynomial $f_1$ of $f$ is the minimal polynomial of the Frobenius action. It follows that $R\cong\ZZ_\ell[t]/f_1(t)\ZZ_\ell[t]$. 

The Galois group $\Gal(\overline{\FF}_\ell/\FF_\ell)$ acts on the set $\Delta$ of roots of $\Bar f_1$. Let $\T\subset\Delta$ be the image of a section of the natural map $\Delta\to\Delta/\Gal(\overline{\FF}_\ell/\FF_\ell)$.
By Theorem~\ref{JF} applied to the action of Frobenius on $T_\ell(A)/\ell T_\ell(A)$ the matrix of $F$ is conjugate to the sum of $J(\lambda, N_\lambda)$ for $\lambda\in\T$. We generalize this result to Tate modules.

By the Hensel lemma \cite[18.5.13]{ega4}, we can decompose $f_1$ into the product of monic polynomials $f_\lambda\in\ZZ_\ell[t]$ such that $\Bar f_\lambda$ is a power of an irreducible monic polynomial corresponding to $\lambda\in\T$. 
We have a natural homomorphism of rings
$$\phi:R\to\prod_{\lambda\in\T} \ZZ_\ell[t]/f_\lambda(t)\ZZ_\ell[t].$$
Since $P_\lambda$ is monic, $R_\lambda=\ZZ_\ell[t]/f_\lambda(t)\ZZ_\ell[t]$ is free and finitely generated $\ZZ_\ell$-module. 
By the Chinese remainder theorem $\phi$ is an isomorphism modulo $\ell$. On the other hand, $\phi$ is a homomorphism of finitely generated free $\ZZ_\ell$-modules. It follows that $\phi$ is an isomorphism.

The module $T_\ell(A)$ is an $R$-module such that the image of $t$ in $R$ acts as Frobenius. 
Put $T_\lambda=R_\lambda T_\ell(A)$, then $T_\ell(A)=\oplus T_\lambda$. By Theorem~\ref{JF}, the matrix of the action of $t$ on $T_\lambda/\ell T_\lambda$ is of the form $J(\lambda, N_\lambda)$ in some basis. We now sum up our observations. 

\begin{prop}\label{red1} There is an isomorphism of $R$-modules
$T_\ell(A)\cong\oplus_{\lambda\in\T} T_\lambda$ such that $F$ acts on $T_\lambda/\ell T_\lambda$ with matrix $J(\lambda, N_\lambda)$ in some basis.
\end{prop}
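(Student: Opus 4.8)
\emph{Proof proposal.} The statement is in effect a summary of the preceding discussion, so the plan is to assemble the pieces already prepared. First I would recall that the natural ring homomorphism $\phi\colon R\to\prod_{\lambda\in\T}R_\lambda$ is an isomorphism: modulo $\ell$ it is the Chinese remainder isomorphism for the pairwise coprime polynomials $\Bar f_\lambda$ (each a power of a distinct monic irreducible over $\FF_\ell$), and since $\phi$ is a homomorphism of finitely generated free $\ZZ_\ell$-modules, completeness of $\ZZ_\ell$ upgrades an isomorphism modulo $\ell$ to an honest isomorphism. Pulling back through $\phi$ the orthogonal idempotents of $\prod_\lambda R_\lambda$ yields idempotents $e_\lambda\in R$ with $\sum_{\lambda\in\T}e_\lambda=1$, $e_\lambda e_\mu=0$ for $\lambda\neq\mu$, and $e_\lambda R\cong R_\lambda$.

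Next I would apply these idempotents to the $R$-module $T_\ell(A)$. Setting $T_\lambda=e_\lambda T_\ell(A)=R_\lambda T_\ell(A)$ gives an internal direct sum $T_\ell(A)=\bigoplus_{\lambda\in\T}T_\lambda$ in which each summand $T_\lambda$ is $R$-stable, $R$ acting on $T_\lambda$ through the quotient $R\to R_\lambda$; hence this is a decomposition of $R$-modules, which is the isomorphism asserted by the proposition. Since reduction modulo $\ell$ commutes with applying $e_\lambda$, the quotient $T_\lambda/\ell T_\lambda$ is a module over $R_\lambda/\ell R_\lambda\cong\FF_\ell[t]/\Bar f_\lambda(t)\FF_\ell[t]$ on which $t$ acts as $F$. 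As $\Bar f_\lambda$ is a power of the monic irreducible $\Bar P_\lambda\in\FF_\ell[t]$ vanishing at $\lambda$, and $\Bar P_\lambda$ is separable (being irreducible over the finite field $\FF_\ell$), the characteristic polynomial of $F$ on $T_\lambda/\ell T_\lambda$ is a power of the separable polynomial $\Bar P_\lambda$. Theorem~\ref{JF}, applied over $\FF_\ell$ with the orbit-section set taken to be the singleton $\{\lambda\}$, then produces a basis of $T_\lambda/\ell T_\lambda$ in which $F$ has matrix a single generalized Jordan cell $J(\lambda,N_\lambda)$ for some nilpotent matrix $N_\lambda$. This is precisely the assertion of the proposition.

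The two substantial inputs — that $\phi$ is an isomorphism (the one step where completeness of $\ZZ_\ell$ is genuinely needed, to pass from ``isomorphism modulo $\ell$'' to ``isomorphism'') and the generalized Jordan decomposition of Theorem~\ref{JF} — are already at hand, so I do not anticipate a real obstacle here. The only points demanding a little care are that the ring decomposition $R\cong\prod_\lambda R_\lambda$ induces an $R$-module, and not merely $\ZZ_\ell$-module, decomposition of $T_\ell(A)$ compatible with reduction modulo $\ell$, and that each summand $T_\lambda/\ell T_\lambda$ meets the separability hypothesis of Theorem~\ref{JF}; both are immediate from the constructions above.
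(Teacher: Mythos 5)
Your proof is correct and follows the same route as the paper: establish that $\phi\colon R\to\prod_\lambda R_\lambda$ is an isomorphism (CRT modulo $\ell$ plus the fact that a map of finitely generated free $\ZZ_\ell$-modules that is an isomorphism modulo $\ell$ is an isomorphism), use the resulting decomposition to split $T_\ell(A)$ into the $T_\lambda=R_\lambda T_\ell(A)$, and apply Theorem~\ref{JF} to each $T_\lambda/\ell T_\lambda$. The only difference is presentational — you spell out the orthogonal idempotents and the separability check for Theorem~\ref{JF}, which the paper leaves implicit.
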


\subsection*{Reduction step 2.}
Let $L_\lambda$ be an unramified extension of $\QQ_\ell$ with residue field $\FF_\ell(\lambda)$. Denote by $S_\lambda$ the ring of integers of $L_\lambda$. 

\begin{prop}\label{red2}
There is an isomorphism $R_\lambda\cong S_\lambda[t]/g S_\lambda[t]$ for some $g\in S_\lambda[t]$ 
such that $g\equiv(t-\lambda)^d\bmod\ell S_\lambda$, where $d$ is the multiplicity of $\lambda$ in ${\Bar f}_\lambda$.
\end{prop}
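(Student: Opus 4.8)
The plan is to exhibit $R_\lambda$ as a finite free $S_\lambda$-algebra of rank $d$, generated by the image of $t$, and then to take $g$ to be the characteristic polynomial of multiplication by $t$.

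\emph{Finding $S_\lambda$ inside $R_\lambda$.} The ring $R_\lambda=\ZZ_\ell[t]/f_\lambda(t)\ZZ_\ell[t]$ is finite and free over $\ZZ_\ell$, hence $\ell$-adically complete, and $R_\lambda/\ell R_\lambda=\FF_\ell[t]/{\Bar f_\lambda}$ is a local Artinian ring whose residue field is $\FF_\ell(\lambda)$ (recall $\Bar f_\lambda$ is a power of the minimal polynomial of $\lambda$ over $\FF_\ell$, in which $\lambda$ has multiplicity $d$). Thus $R_\lambda$ is a complete local ring with residue field $\FF_\ell(\lambda)$. Let $\Bar h\in\FF_\ell[t]$ be the minimal polynomial of $\lambda$ over $\FF_\ell$ and $h\in\ZZ_\ell[t]$ a monic lift of it; since $\FF_\ell(\lambda)/\FF_\ell$ is separable, $\lambda$ is a simple root of $\Bar h$, so by Hensel's lemma \cite[18.5.13]{ega4} there is a root $\alpha\in R_\lambda$ of $h$ reducing to $\lambda$. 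The homomorphism $\ZZ_\ell[x]/h(x)\ZZ_\ell[x]\to R_\lambda$, $x\mapsto\alpha$, is injective (its source is a domain and $\ell$ lies outside the kernel), so it identifies $S_\lambda$ with $\ZZ_\ell[\alpha]\subset R_\lambda$ and induces the identity on residue fields. From now on $R_\lambda$ is regarded as an $S_\lambda$-algebra.

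\emph{Freeness, and the polynomial $g$.} As $R_\lambda$ is finite over $\ZZ_\ell\subset S_\lambda$, it is finite over $S_\lambda$; and since $S_\lambda$ is a discrete valuation ring with uniformiser $\ell$ while $\ell$ is a non-zero-divisor on $R_\lambda$, the $S_\lambda$-module $R_\lambda$ is torsion-free, hence free. Comparing $\ZZ_\ell$-ranks, $\rk_{\ZZ_\ell}R_\lambda=\deg f_\lambda=nd$ and $\rk_{\ZZ_\ell}S_\lambda=n$ with $n=[\FF_\ell(\lambda):\FF_\ell]$, so $\rk_{S_\lambda}R_\lambda=d$. Let $g\in S_\lambda[t]$ be the characteristic polynomial of multiplication by $t$ on this rank-$d$ free module; then $g$ is monic of degree $d$, and $g(t)=0$ in $R_\lambda$ by the Cayley--Hamilton theorem. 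Since $R_\lambda$ is generated by $t$ over $\ZZ_\ell$, the $S_\lambda$-algebra homomorphism $\pi\colon S_\lambda[t]\to R_\lambda$, $t\mapsto t$, is surjective, and $g\in I:=\ker\pi$. Reducing modulo $\ell$ and using Remark~\ref{rem1}, $R_\lambda/\ell R_\lambda\cong\FF_\ell(\lambda)[t]/(t-\lambda)^{d}$ as $\FF_\ell(\lambda)$-algebras, on which multiplication by $t$ equals $\lambda\cdot\id$ plus a nilpotent endomorphism of order $d$; therefore $\Bar g=(t-\lambda)^{d}$, that is $g\equiv(t-\lambda)^{d}\bmod\ell S_\lambda$.

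\emph{Identifying the kernel.} Since $g$ is monic of degree $d$, the ring $S_\lambda[t]/gS_\lambda[t]$ is free of rank $d$ over $S_\lambda$, and $gS_\lambda[t]\subseteq I$. In the exact sequence $0\to I/gS_\lambda[t]\to S_\lambda[t]/gS_\lambda[t]\to R_\lambda\to 0$ the middle and the right terms are free $S_\lambda$-modules of rank $d$, so $I/gS_\lambda[t]$ is $S_\lambda$-torsion; being a submodule of a free $S_\lambda$-module it is torsion-free, hence zero. Thus $I=gS_\lambda[t]$, and $R_\lambda\cong S_\lambda[t]/gS_\lambda[t]$. I expect the only delicate point to be the first step — constructing the unramified subring $S_\lambda\subset R_\lambda$ compatibly with the chosen identification of its residue field with $\FF_\ell(\lambda)$ — after which everything is linear algebra over the discrete valuation ring $S_\lambda$.
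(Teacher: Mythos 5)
Your proof is correct, and it is a genuinely different route from the paper's. The paper first applies Hensel's lemma to factor $f_\lambda$ over $S_\lambda$ into coprime pieces $g_\delta\equiv(t-\delta)^{d_\delta}$, then base-changes to get $R_\lambda\otimes_{\ZZ_\ell}S_\lambda\cong\prod_\delta Z_\delta$ by CRT, projects onto $Z_\lambda=S_\lambda[t]/g_\lambda S_\lambda[t]$, and finally checks that the resulting map $R_\lambda\to Z_\lambda$ is a mod-$\ell$ isomorphism of finite free $\ZZ_\ell$-modules. You instead apply Hensel's lemma \emph{inside} $R_\lambda$: observing that $R_\lambda$ is a complete local ring with residue field $\FF_\ell(\lambda)$, you lift a simple root of the minimal polynomial of $\lambda$ to produce an embedded copy of $S_\lambda$ (a Cohen-type coefficient ring), then run rank-and-torsion arguments over the DVR $S_\lambda$ to identify $R_\lambda\cong S_\lambda[t]/gS_\lambda[t]$ with $g$ the characteristic polynomial of multiplication by $t$.

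What each buys: the paper's argument produces $g$ immediately as a Hensel factor of $f_1$ over $S_\lambda$, which makes it transparent that $g$ divides $f_1$ in $S_\lambda[t]$ — a fact used right after the proposition (``$Q_1(t)=g(t-\alpha)$ divides $f(t-\alpha_\lambda)$''). Your proof yields $g$ more abstractly (as a characteristic polynomial and as a generator of $\ker\pi$), so that divisibility is slightly less visible, though it does follow since $f_1(t)\in\ker\pi$. On the other hand, your construction makes the $S_\lambda$-algebra structure on $R_\lambda$ completely explicit (it is the subring $\ZZ_\ell[\alpha]$), whereas the paper's $S_\lambda$-structure on $R_\lambda$ is implicit via the isomorphism $\phi$, and the verification that $\phi$ is a mod-$\ell$ isomorphism leans on Remark~\ref{rem1} in a way that takes more unwinding than the paper spells out. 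One small point worth flagging in your write-up: when you invoke Remark~\ref{rem1} to conclude $\Bar g=(t-\lambda)^d$, you are implicitly matching the $\FF_\ell(\lambda)$-algebra structure coming from your embedding $S_\lambda\hookrightarrow R_\lambda$ with the one used in the remark; this is harmless because $\overline L_\lambda/\FF_\ell$ is separable, so the section $\overline L_\lambda\to\overline R_\lambda$ is unique, but stating that uniqueness would close the loop cleanly.
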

\proof
Let $\Delta_\lambda$ be the set of roots of ${\Bar f}_\lambda$. Then 
$$ f_\lambda\equiv\prod_{\delta\in\Delta_\lambda}(t-\delta)^{d_\delta}\bmod\ell S_\lambda,$$ for some natural numbers $d_\delta$. By the Hensel lemma, $f_\lambda$ equals to a product of monic polynomials $g_\delta\in S_\lambda[t]$ such that $$g_\delta\equiv (t-\delta)^{d_\delta}\bmod\ell S_\lambda,$$ where $\delta\in\Delta_\lambda$. Define the homomorphism of rings $$R_\lambda\to R_\lambda\otimes_{\ZZ_\ell} S_\lambda$$ by $r\mapsto r\otimes 1$. By the Chinese remainder theorem,
$$R_\lambda\otimes_{\ZZ_\ell} S_\lambda\cong \prod_{\delta\in\Delta_\lambda}Z_\delta$$ where $Z_\delta\cong S_\lambda[t]/g_\delta S_\lambda[t]$. Take the projection $R_\lambda\otimes_{\ZZ_\ell} S_\lambda\to Z_\lambda$. We get a homomorphism $\phi:R_\lambda\to Z_\lambda$. It is a homomorphism of free $\ZZ_\ell$-modules and an isomorphism modulo $\ell$ by Remark~\ref{rem1}. We conclude that $\phi$ is an isomorphism. Put $g=g_\lambda$. 
\qed

Choose an element $\alpha\in S_\lambda$ such that $\Bar\alpha=\lambda$.
The polynomial $Q_\lambda(t)=g(t-\alpha)$ is the minimal polynomial of $F-\alpha$ acting on $T_\lambda$. Clearly, $Q_\lambda\equiv t^d\bmod\ell$, where $d=\deg Q_\lambda$. We have reduced our task to the following linear algebra problem.


\subsection*{The problem.}
Let $L$ be an unramified extension of $\QQ_\ell$, and let $S$ be its ring of integers. Suppose we are given a polynomial $Q\in S[t]$ such that $Q\equiv t^d\bmod\ell$, where $d=\deg Q$. Let $V$ be an $L$-vector space of dimension $d$, and let $E$ be a semisimple linear transformation on $V$ with characteristic polynomial $Q$. Denote by $Q_1$ the minimal polynomial of $E$. Put $R=S[t]/Q_1(t)S[t]$. We give a structure of an $R$-module on $V$ such that $t$ acts as $E$. Describe all isomorphism classes of finite $R$-modules of the form $T/\ell T$,
where $T$ is an arbitrary $R$-invariant $S$-lattice in $V$.

If we choose a basis of $T$, the problem can be reformulated as
follows. Let $N$ be the matrix of the action of $E$ on
$T/\ell T$ in some basis over the finite field $S/\ell S$. It is a
nilpotent matrix over $S/\ell S$, since $Q\equiv t^d\bmod\ell$. Is
it possible to find a matrix $M$ over $S$ such that
$Q(t)=\det(t-M)$, and $M\equiv N\bmod\ell$? We will refer to this
question as {\it lifting of nilpotent matrix $N$ to $S$ with
respect to $Q$}.

\section{$\ell$-torsion of abelian varieties}
\label{ltors} Let $S$ be the ring of integers in an unramified extension $L$ of $\QQ_\ell$.
Assume we are given a finitely generated free $S$-module $T$ endowed with an $S$-linear injective
endomorphism $E$ which induces on $T/\ell T$ a nilpotent endomorphism $N$. Let $Q(t)=\det(t-E)$. 
In this section we give a partial answer to the question: when is it
possible to lift $N$ to $S$ with respect to $Q$? Using this result
we get a classification of group schemes of the form $A[\ell]$ for
$A$ from a fixed isogeny class such that $f_A$ is separable.

We associate to $N$ a polygon of special type. For a sequence of natural numbers $m_1\geq\dots\geq m_r> 0$ we define {\it the Young polygon} $\Hp(m_1,\dots,m_r)$ as the convex polygon
with vertices $(\sum_{j=1}^{i}m_j,i)$ for $0\leq i\leq r$. The {\it dimension of} $Y=\Hp(m_1,\dots,m_r)$ is $\dim Y=\sum_{j=1}^{r}m_j$. The {\it height of} $Y$ is $\Ht(Y)=r$.

There is a basis of $T/\ell T$ such that the matrix of $N$ is a sum of Jordan cells of dimensions $m_1,\dots, m_r$.
Clearly, numbers $m_1,\dots, m_r$ determine $N$ uniquely up to conjugation.
We associate to $N$ the Young polygon $\Hp(N)$ given by the sequence $m_1,\dots, m_r$.
We also denote this Young polygon by $\Hp(E|T)$. 

The Young polygon has $(0,0)$ and $(d,r)$ as its endpoints, and
its slopes are $1/m_1,\dots,1/m_r$. For example, the following
picture shows Young polygons for the zero matrix (Pic. 1) and the Jordan cell of dimension two (Pic. 2).

\begin{pspicture}(200pt,110pt)
\psline[linewidth=0.3pt](80pt,30pt)(10pt,30pt)(10pt,100pt)
\psline[linewidth=1.2pt](10pt,30pt)(70pt,90pt)

\pscircle*(10pt,30pt){1.5pt} \pscircle*(40pt,30pt){1.5pt}
\pscircle*(40pt,60pt){1.5pt} \pscircle*(10pt,90pt){1.5pt}
\pscircle*(70pt,30pt){1.5pt} \pscircle*(70pt,90pt){1.5pt}
\pscircle*(10pt,60pt){1.5pt}

\rput(5pt,23pt){0} \rput(37pt,23pt){1} \rput(67pt,23pt){2}
 \rput(3pt,87pt){2} \rput(3pt,57pt){1}

\rput(40pt,5pt){Pic. 1}

\psline[linewidth=0.3pt](180pt,30pt)(110pt,30pt)(110pt,100pt)
\psline[linewidth=1.2pt](110pt,30pt)(170pt,60pt)

\pscircle*(110pt,30pt){1.5pt} \pscircle*(140pt,30pt){1.5pt}
\pscircle*(110pt,90pt){1.5pt} \pscircle*(170pt,30pt){1.5pt}
\pscircle*(110pt,60pt){1.5pt} \pscircle*(170pt,60pt){1.5pt}

\rput(105pt,23pt){0} \rput(137pt,23pt){1} \rput(167pt,23pt){2}
\rput(103pt,87pt){2} \rput(103pt,57pt){1}

\rput(140pt,5pt){Pic. 2}
\end{pspicture}

\bigskip

Denote by $\nu$ the normalized valuation on $L$, i.e. $\nu(\ell)=1$. Suppose that $Q(t)=\sum_i Q_i t^{d-i}$. Take the lower convex hull of the points $(i,\nu(Q_i))$ for $0\leq i\leq \deg Q$ in $\RR^2$. The boundary
of this region is called {\it the Newton polygon $\Np(Q)$ of $Q$}. Its vertices have integer coefficients, and $(0,0)$ and $(d,\nu(Q_d))$ are its endpoints. The {\it slopes of $Q$} are the slopes of this polygon. Note that each slope has a multiplicity. 

\begin{thm}{\label{main1}}
The Newton polygon $\Np(Q)$ lies on or above Young polygon $\Hp(E|T)$.
\end{thm}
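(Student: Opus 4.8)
\emph{Overall plan.} I would argue by induction on $d=\dim_L V$; the only genuinely arithmetic ingredient is an estimate at the right-hand endpoints, and everything else is formal combinatorics of Newton polygons and of partitions attached to nilpotent operators. The first thing I would record is that $\nu(\det E)\ge\Ht(\Hp(E|T))$. To see this, put $E$ in Smith normal form: choose $S$-bases $u_1,\dots,u_d$ and $w_1,\dots,w_d$ of $T$ with $Eu_i=\ell^{a_i}w_i$, $0\le a_1\le\dots\le a_d$. Then $\nu(\det E)=\sum_i a_i$. Modulo $\ell$ the endomorphism $E$ sends $u_i$ to $w_i$ if $a_i=0$ and to $0$ if $a_i\ge 1$, and the reductions of the $w_i$ form a basis of $T/\ell T$; hence $\Ker\bar E$ on $T/\ell T$ has dimension $\#\{i:a_i\ge 1\}$, which is exactly $\Ht(\Hp(E|T))$. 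Therefore $\nu(\det E)=\sum_i a_i\ge\#\{i:a_i\ge 1\}=\Ht(\Hp(E|T))$. Since $Q\equiv t^d\bmod\ell$, both $\Np(Q)$ and $\Hp(E|T)$ issue from $(0,0)$, so this inequality says that the right-hand endpoint $(d,\nu(\det E))$ of $\Np(Q)$ is on or above the right-hand endpoint $(d,\Ht(\Hp(E|T)))$ of $\Hp(E|T)$.

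\emph{Base case.} I would treat as base case the situation in which $0$ and $V$ are the only $E$-stable subspaces of $V$. Then $V$ is a simple $L[t]$-module, so $Q$ is irreducible over $L$; its roots are then conjugate over $L$ and all have the same valuation, so $\Np(Q)$ is the single segment from $(0,0)$ to $(d,\nu(\det E))$. Now a Young polygon is convex, hence lies on or below the chord joining its endpoints, namely the segment from $(0,0)$ to $(d,\Ht(\Hp(E|T)))$; by the endpoint estimate the slope of that chord is at most $\nu(\det E)/d$, so $\Np(Q)$ lies on or above the chord, hence on or above $\Hp(E|T)$.

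\emph{Inductive step.} If instead there is an $E$-stable subspace $W$ with $0\ne W\ne V$, I would set $T_1=T\cap W$, which is saturated in $T$, hence a free $E$-stable submodule, and $T_2=T/T_1$, which is free with an induced (still injective, since $\det(E|_{T_1})\det(E|_{T_2})=\det E\ne 0$) $E$-action. Writing $Q_i=\det(t-E|_{T_i})$, one has $Q=Q_1Q_2$, so $\Np(Q)$ is obtained by merging the slopes of $\Np(Q_1)$ and $\Np(Q_2)$ into one increasing sequence; equivalently $\Np(Q)(x)=\min_{x_1+x_2=x}\bigl(\Np(Q_1)(x_1)+\Np(Q_2)(x_2)\bigr)$, from which it is clear that this merging operation on polygons from the origin is monotone for ``lies on or above''. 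On the other hand, reduction mod $\ell$ gives an exact sequence $0\to T_1/\ell T_1\to T/\ell T\to T_2/\ell T_2\to 0$ of modules on which $\bar E$ acts nilpotently, and applying to it the left-exact functor ``kernel of $\bar E^{j}$'' gives, for every $j\ge 1$,
$$\dim_{\FF}\Ker\bigl(\bar E^{j}|_{T/\ell T}\bigr)\ \le\ \dim_{\FF}\Ker\bigl(\bar E^{j}|_{T_1/\ell T_1}\bigr)+\dim_{\FF}\Ker\bigl(\bar E^{j}|_{T_2/\ell T_2}\bigr).$$
These dimensions are the partial sums of the conjugate partitions of the three nilpotent operators, so the inequality says exactly that the partition of $\bar E$ on $T/\ell T$ dominates the partition whose parts are those of $\bar E$ on $T_1/\ell T_1$ together with those of $\bar E$ on $T_2/\ell T_2$; in polygon language, $\Hp(E|T)$ lies on or below the merge of $\Hp(E|T_1)$ and $\Hp(E|T_2)$. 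Since $\dim_L W$ and $\dim_L(V/W)$ are both $<d$, the induction hypothesis gives that $\Np(Q_i)$ lies on or above $\Hp(E|T_i)$; by monotonicity of merging, $\Np(Q)$ lies on or above the merge of $\Hp(E|T_1)$ and $\Hp(E|T_2)$, and hence on or above $\Hp(E|T)$.

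\emph{Main obstacle.} The only arithmetic content is the endpoint estimate $\nu(\det E)\ge\dim_{\FF}\Ker\bar E$, which is precisely what makes the irreducible base case work; the rest is bookkeeping with polygons. The step that needs care is the direction of the dominance inequality in the inductive step: one has to check that forming an extension can only make the partition of a nilpotent operator \emph{more} dominant — equivalently, push its Young polygon \emph{down} — so that $\Hp(E|T)$ comes out below the merge of the two pieces, which is the side required to chain with the induction hypothesis.
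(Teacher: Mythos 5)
Your proof is correct, and it takes a genuinely different route from the paper's. The paper proceeds by a single direct computation: it lifts a Jordan basis of $T/\ell T$ to a set of $R$-module generators of $T$ via Nakayama, writes out the matrix $H$ of $E$ in the resulting $S$-basis (whose columns at non-block-boundary positions have a single entry $1$ and whose columns at block boundaries are divisible by $\ell$), expands each coefficient $Q_m$ as a sum of $m\times m$ principal minors of $H$, and shows that any nonzero minor with $m>m_1+\dots+m_{s-1}$ must involve at least $s$ ``blocks'' and hence have valuation $\geq s$. Your argument instead reduces by induction on $\dim_L V$ to the case where $V$ is $L[t]$-simple: there $\Np(Q)$ is a single segment and the inequality follows from the endpoint estimate $\nu(\det E)\geq\dim_{\FF}\Ker\bar E$ (cleanly obtained from Smith normal form) together with convexity of $\Hp$. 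The inductive step packages three observations — that $\Np(Q_1Q_2)$ is the infimal convolution (merge) of $\Np(Q_1)$ and $\Np(Q_2)$, that merging is monotone for ``on or above,'' and that the short exact sequence $0\to T_1/\ell T_1\to T/\ell T\to T_2/\ell T_2\to 0$ forces the Jordan type of $\bar E$ on the middle to dominate the union of the types on the ends (via left-exactness of $\Ker\bar E^{\,j}$, partial sums of conjugate partitions, and the fact that conjugation reverses dominance). The direction of the dominance inequality, which you rightly flag as the step needing care, does come out the right way. What each approach buys: the paper's proof is entirely self-contained and elementary (only determinant expansions), while yours isolates the one arithmetic input (the endpoint estimate) and explains the rest as formal properties of Newton/Young polygons under products and extensions, which makes the shape of the theorem more transparent; it does, however, quietly rely on the translation between dominance of partitions and comparison of Young polygons, and on $(\mu\cup\nu)'=\mu'+\nu'$, which you might want to state explicitly if writing this up in full.
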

\begin{proof}
Let $Q_1\in S[t]$ be the minimal polynomial of $E$, and let $R=S[t]/Q_1(t)S[t]$.
Let $x\in R$ be the image of $t$ under the natural projection. The module $T$ is naturally an $R$-module such that $x$ acts as $E$.

Suppose $\Hp(E|T)=\Hp(m_1,\dots,m_r)$. Take generators $v'_1,\dots, v'_r$ of $T/\ell T$ over $R$ such that
$$v'_1,xv'_1\dots,x^{m_1-1}v'_1,\dots, v'_r,\dots, x^{m_r-1}v'_r$$ is a Jordan basis for $x$. 
By the Nakayama lemma there exist generators $v_1,\dots, v_r$ of $T$ over $R$ which lift $v'_1,\dots, v'_r$.
Let $H$ be a matrix of $x$ in the basis 
$$v_1,xv_1\dots,x^{m_1-1}v_1,\dots, v_r,\dots, x^{m_r-1}v_r,$$
and let $H_{i_1,\dots,i_m}$ be the determinant of
the submatrix of $H$ cut by the columns and rows with the numbers
$i_1,\dots,i_m$. The characteristic polynomial of $x$ acting on
$T$ is $$Q(t)=\sum_{m=0}^d Q_mt^{d-m},$$ and
$$Q_m=(-1)^m\sum_{i_1<\dots<i_m}H_{i_1,\dots,i_m}.$$ It follows
that
$$\nu(Q_m)\geq\min_{i_1<\dots<i_m}\nu(H_{i_1,\dots,i_m}).$$

Let $m=m_1+\dots+m_{s-1}+a$, where $0< a\leq m_s$. We have to show
that if $H_{i_1,\dots,i_m}\neq 0$, then
$\nu(H_{i_1,\dots,i_m})\geq s$. Note that if $i\neq m_1+\dots+m_j$
for all $j$, then the $i$-th column of $H$ has $1$ only in the
position number $i+1$, and its other entries are zero. Thus if
$i\in \{i_1,\dots,i_m\}$, and $H_{i_1,\dots,i_m}\neq 0$, then
$i+1\in \{i_1,\dots,i_m\}$. If $i= m_1+\dots+m_j$ for some $j$,
then $\ell$ divides the $i$-th column. We see that if
$H_{i_1,\dots,i_m}\neq 0$, then the set $\{i_1,\dots,i_m\}$ is a
union of {\it blocks}. Each block is an interval
$$\{i,i+1,\dots,m_1+\dots+m_j\}$$ of length not greater than $m_j$.
If $m>m_1+\dots+m_{s-1}$, then the set $\{i_1,\dots,i_m\}$
contains no less than $s$ blocks, and $\nu(H_{i_1,\dots,i_m})\geq
s$. Thus if $(m,\nu(Q_m))$ is a vertex of $\Np(Q)$ then
$\nu(Q_m)\geq s$, and $\Np(Q)$ lies on or above $\Hp(E|T)$.
\end{proof}

\begin{thm}{\label{main2}}
Let $R=S[t]/Q(t)S[t]$, and let $V=R\otimes_{\ZZ_\ell} \QQ_\ell$.
Let $Y$ be a Young polygon such that $\Np(Q)$ lies on or above
$Y$. Then there exists an $R$-lattice $T$ in $V$ such that
$\Hp(x|T)=Y$.
\end{thm}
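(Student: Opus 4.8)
The plan is to prove the converse to Theorem~\ref{main1} by an explicit construction of the lattice $T$, proceeding by induction on $\deg Q = d$. Given that $\Np(Q)$ lies on or above a Young polygon $Y = \Hp(m_1,\dots,m_r)$, I want to build an $R$-lattice $T\subset V$ together with $R$-generators $v_1,\dots,v_r$ so that, modulo $\ell$, the vectors $v_i,xv_i,\dots,x^{m_i-1}v_i$ form a Jordan basis for the nilpotent operator $N$ induced by $x=E$; this is exactly the condition $\Hp(x|T)=Y$. The base case is $r=1$, i.e. $Y$ has a single slope $1/d$: here the hypothesis $\Np(Q)\ge Y$ forces $\nu(Q_m)\ge 1$ for $1\le m\le d-1$, so $Q\equiv t^d\bmod\ell$ and the companion-matrix lattice $T=R$ with cyclic generator $v_1=1$ already has $N$ equal to a single Jordan cell of size $d$.

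For the inductive step I would \emph{peel off the largest cell}. Let $m_1$ be the largest part. The key observation is that if $\Np(Q)$ lies on or above $Y$, then the ``sub-polygon'' condition lets one factor, after passing to a suitable sublattice or quotient, the operator into a cyclic piece of size $m_1$ and a remainder whose Newton data dominates $\Hp(m_2,\dots,m_r)$. Concretely: by the Newton polygon's position, the first segment of $\Np(Q)$ (from $(0,0)$ to roughly $(m_1,1)$) has slope $\le 1/m_1$, which lets me split off a monic factor $Q^{(1)}$ of $Q$ with $Q^{(1)}\equiv t^{m_1}\bmod\ell$ via Hensel/Newton-polygon factorization over $S$; the complementary factor $Q^{(2)}$ satisfies $Q^{(2)}\equiv t^{d-m_1}\bmod\ell$ and, I claim, $\Np(Q^{(2)})$ lies on or above $\Hp(m_2,\dots,m_r)$ (this is where the convexity of Newton polygons and the combinatorics of the $m_j$ enter). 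Apply the induction hypothesis to $Q^{(2)}$ to get a lattice $T^{(2)}$ with the right Young polygon, take $T^{(1)}$ to be the cyclic lattice for $Q^{(1)}$, and set $T = T^{(1)}\oplus T^{(2)}$ inside the corresponding decomposition $V = V^{(1)}\oplus V^{(2)}$. Since reduction mod $\ell$ is compatible with this direct sum and the Jordan types add, $\Hp(x|T)$ is the Young polygon with parts $m_1,m_2,\dots,m_r$, as required.

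The main obstacle I anticipate is the claim that the complementary factor $Q^{(2)}$ inherits the hypothesis, i.e. that $\Np(Q^{(2)})\ge\Hp(m_2,\dots,m_r)$. This is a purely combinatorial statement about the Newton polygon: removing a segment of horizontal length $m_1$ and vertical rise $1$ from the bottom of a polygon that dominates $Y$ should leave a polygon dominating the ``tail'' Young polygon $\Hp(m_2,\dots,m_r)$ shifted appropriately. The subtlety is that the factorization of $Q$ corresponding to the first Newton segment may not rise by exactly $1$ over exactly $m_1$ columns — it rises by $1$ over at least $m_1$ columns — so one must choose the factorization carefully (take the breakpoint of $\Np(Q)$ at or just past abscissa $m_1$) and then check that the remaining polygon still clears each vertex $(\,m_2+\dots+m_{j},\,j-1)$ of the shifted tail. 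I would handle this by writing the vertices of $Y$ and of $\Np(Q)$ explicitly and comparing heights column by column, using that the slope sequence of $Y$ is decreasing ($1/m_1\le 1/m_2\le\dots$) so that the ``budget'' of vertical rise not used up by the first $m_1$ columns is exactly what is needed downstream.

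A cleaner alternative, which I would fall back on if the peeling argument gets tangled, is to construct $T$ directly as a lattice spanned by $\ell^{c_{ij}}x^{j}v_i$ for suitable integers $c_{ij}$ read off from the gap between $\Np(Q)$ and $Y$, where $v_1,\dots,v_r$ are chosen generators of $R\otimes\QQ_\ell$ coming from a generalized Jordan decomposition (Theorem~\ref{JF}) of $E$ over $L$; one then verifies that this is $R$-stable (using $\Np(Q)\ge Y$ to control denominators) and that its reduction has the prescribed Jordan type. Either way the heart of the matter is the same bookkeeping between valuations of coefficients of $Q$ and the shape of $Y$.
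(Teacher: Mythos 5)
Your proposal does not follow the paper's proof, and I believe both of your suggested routes have genuine gaps centered on the same issue: you are trying to produce $r$ generators by decomposing $V$ or $Q$, but no such decomposition need exist.

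Your primary route peels off the largest Jordan cell by splitting $Q = Q^{(1)}Q^{(2)}$ with $\deg Q^{(1)} = m_1$ via Hensel/Newton-polygon factorization and then taking $T = T^{(1)} \oplus T^{(2)}$. This cannot work in general: Newton-polygon factorization over $S$ only produces a nontrivial splitting when $\Np(Q)$ has a genuine break (two distinct slopes), and Hensel's lemma needs factors that are coprime modulo $\ell$, which is never available here since $Q \equiv t^d \bmod \ell$. Take $Q$ irreducible over $L$ with a single Newton slope $1/2$ and degree $4$ (e.g.\ an Eisenstein-type polynomial with $\nu(Q_4)=2$), and $Y = \Hp(2,2)$. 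The hypothesis $\Np(Q) \ge Y$ holds with equality, and the theorem asserts a lattice with two Jordan blocks of size $2$ exists, but $Q$ admits no factorization over $L$ at all, so there is no direct-sum decomposition of $V$ to build $T$ from. Your fallback construction has the same problem: when $Q$ is separable and irreducible, the generalized Jordan decomposition of $x$ on $V$ over $L$ (Theorem~\ref{JF}) consists of a single cell, so it supplies only one generator $v_1$; you cannot read off $r > 1$ generators from it, and the proposed set $\{\ell^{c_{ij}}x^j v_i\}$ collapses.

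The paper sidesteps all factorization. It sets $v_1 = 1 \in R \subset V$ and, for each partial sum $m = m_1 + \dots + m_s$, defines
$$v_{s+1} = \frac{x^m + \sum_{j=1}^m Q_j x^{m-j}}{\ell^s} \in V,$$
which is an $\ell$-divided truncation of $Q(x)$ built entirely inside the single cyclic module $V = L[t]/Q(t)$; the elements $v_1, xv_1, \dots, x^{m_1-1}v_1, v_2, \dots, x^{m_r-1}v_r$ have distinct degrees as polynomials in $x$ and hence span an $S$-lattice $T$. The hypothesis $\Np(Q) \ge Y$ is used precisely to show $\ell^s \mid Q_j$ for $j > m - m_s$, which makes the relation $x^{m_s}v_s = \ell(v_{s+1} - u_s)$ hold with $u_s \in T$, giving $R$-stability and the prescribed Young polygon. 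This works whether or not $Q$ factors. If you want to salvage your approach, you would have to abandon the decomposition of $V$ and instead mimic this single-cyclic-module construction, which is essentially the paper's proof.
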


\begin{proof}
Recall that $Q=\det(t-x|V)$ is the minimal polynomial of the
action of $x$ on $V$. Let $Y=Y(m_1,\dots,m_r)$. 
First, we find a lattice $T$ in $V$ over $S$ such that
$R\subset T\subset V$. After that we prove that $T$ is an $R$-module.

Let $m=m_1+\dots+m_s$, and let
$Q(t)=\sum_{i=0}^d Q_it^{d-i}$. For $1\leq s\leq r$ we put
$$v_{s+1}=\frac{x^m+\sum_{j=1}^m Q_jx^{m-j}}{\ell^s}.$$
In addition, let $v_1=1$, and let $v_{r+1}=0$. Note that
$$v_1,xv_1\dots,x^{m_1-1}v_1,\dots, v_r,\dots, x^{m_r-1}v_r$$ have
different degrees viewed as polynomials in $x$, and hence generate a lattice 
$T$ over $S$.

Now we prove that $T$ is an $R$-module. The point $(m-m_s,s-1)$ is
a vertex of $Y$. By assumption, $\Np(Q)$ lies on or above
$Y$, thus $(m-m_s,s-1)$ is not higher than $\Np(Q)$. It
follows that $\ell^s$ divides $Q_j$ for all $j>m-m_s$. Thus $$
u_s=\frac{\sum_{j=m-m_s+1}^{m} Q_jx^{m-j}}{\ell^s}\in S\cdot
1\subset T.$$ Moreover, $$x^{m_s}v_s=\ell (v_{s+1}-u_s)\in \ell
T.$$ This proves that $xT\subset T$, and that $\Hp(x|T)=Y$.
\end{proof}

\begin{ex}Let $Q(t)=t^2-\ell t -\ell$. Its Newton polygon is drawn on
Picture~2. Then we can lift the nonzero nilpotent Jordan cell (its
Young polygon is equal to $\Np(Q)$). For example, take
$$M=\begin{pmatrix}
0&\ell  \\
1&\ell  \\
\end{pmatrix}.$$

Clearly, $Q(t)=\det(t-M)$. We can not lift the zero matrix,
because its Young polygon (see Pic. 1) is higher than $\Np(Q)$.
\end{ex}

Note that if $Q$ is not separable, then the action of $x$ on $V$ is not semisimple.  By Theorems~\ref{main1} and~\ref{main2} we get

\begin{corollary}
Suppose that $Q$ is separable. One can lift $N$ to $S$ with respect to $Q$ if and only if $\Np(Q)$ lies on or above $\Hp(N)$. 
\end{corollary}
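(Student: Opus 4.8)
The plan is to reduce the Corollary to a direct combination of Theorems~\ref{main1} and~\ref{main2}. First I would set up the correspondence from the problem statement: given the nilpotent matrix $N$ over $S/\ell S$ and the polynomial $Q\in S[t]$ with $Q\equiv t^d\bmod\ell$, assume $Q$ is separable. Since $Q$ is separable, $Q_1=Q$, the ring $R=S[t]/Q(t)S[t]$ is reduced, and $V=R\otimes_{\ZZ_\ell}\QQ_\ell$ carries a semisimple action of $x$ with characteristic polynomial $Q$. This is exactly the setup of Section~\ref{ltors}, so both main theorems apply verbatim.

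For the forward direction, suppose $N$ can be lifted to $S$ with respect to $Q$, i.e., there is a matrix $M$ over $S$ with $\det(t-M)=Q$ and $M\equiv N\bmod\ell$. Let $T=S^d$ with $E=M$ acting on it; then $T/\ell T$ carries the nilpotent matrix $N$, so $\Hp(E|T)=\Hp(N)$. By Theorem~\ref{main1}, $\Np(Q)$ lies on or above $\Hp(E|T)=\Hp(N)$, which is the desired conclusion.

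For the converse, suppose $\Np(Q)$ lies on or above $\Hp(N)$. Write $\Hp(N)=\Hp(m_1,\dots,m_r)$. Apply Theorem~\ref{main2} with $Y=\Hp(N)$: since $\Np(Q)$ lies on or above $Y$, there is an $R$-lattice $T$ in $V$ with $\Hp(x|T)=Y=\Hp(N)$. Choosing an $S$-basis of $T$, the action of $x$ on $T/\ell T$ is, up to conjugation, the nilpotent matrix with Jordan cells of dimensions $m_1,\dots,m_r$, i.e. it is conjugate to $N$; and the action of $x$ on $T$ gives a matrix $M$ over $S$ with $\det(t-M)=\det(t-x|T)=Q$ (the characteristic polynomial is unchanged under extension of scalars from $S$ to $L$, where $x$ has characteristic polynomial $Q$). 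Conjugating $M$ so that its reduction is exactly $N$ (not merely conjugate to it) completes the lift.

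I do not expect a genuine obstacle here, since all the substance is already carried by the two theorems; the only point requiring a word of care is the observation that separability of $Q$ guarantees $Q_1=Q$ and hence that the lattice $T$ produced by Theorem~\ref{main2} has characteristic polynomial exactly $Q$ rather than some polynomial with the same radical. Once that is noted, the proof is just the two implications above, and one may simply write ``This is immediate from Theorems~\ref{main1} and~\ref{main2}.''
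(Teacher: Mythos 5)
Your proof is correct and is exactly the deduction the paper intends: the paper itself gives no argument beyond ``By Theorems~\ref{main1} and~\ref{main2} we get,'' and your two directions spell out precisely that. The one small imprecision is in your closing remark: the lattice produced by Theorem~\ref{main2} always has characteristic polynomial $Q$ (since $V=R\otimes\QQ_\ell$ is cyclic), so the real role of separability is rather to guarantee that the cyclic $V$ of Theorem~\ref{main2} agrees with the semisimple $V$ of ``The problem'' (equivalently, that the resulting $M$ is automatically semisimple), but this does not affect the validity of your argument.
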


Suppose that $Q$ is not separable, and $N$ is a nilpotent matrix such that $\Np(Q)$ lies on or above $\Hp(N)$.
Then in general it is not possible to lift $N$ to $S$ with respect to $Q$. We discuss a partial solution of this problem in section~\ref{mf}. Now we prove the following simple result.

\begin{prop}\label{np_deg2}
Suppose $Q=P^r$, where $\deg P=2$, and $P$ is separable. Let $R=S[t]/P(t)S[t]$, and let $V=(R\otimes_{\ZZ_\ell} \QQ_\ell)^r$. There exists an $S$-lattice $T$ in $V$ such
that $x$ acts on $T/\ell T$ with Young polygon $Y$ if and
only if $\Np(P^r)$ lies on or above $Y$, and all slopes of
$Y$ are equal to $1/2$ or $1$.\qed
\end{prop}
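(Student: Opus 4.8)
The plan is to prove the two implications separately: the forward direction from Theorem~\ref{main1} together with one remark about $R/\ell R$, and the converse by building $T$ explicitly as a direct sum of two kinds of rank-two lattices.

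The preliminary remark is that $\Bar P=t^2$: since $P$ is monic of degree $2$ and $(\Bar P)^r=\Bar Q=t^{2r}$ in the unique factorization domain $(S/\ell S)[t]$, necessarily $\Bar P=t^2$. Hence $R/\ell R\cong(S/\ell S)[t]/t^2(S/\ell S)[t]$, so for every $R$-lattice $T$ in $V$ the endomorphism $x$ induces on $T/\ell T$ a nilpotent $N$ with $N^2=0$; every Jordan cell of $N$ then has dimension $1$ or $2$, i.e.\ every slope of $\Hp(x|T)$ is $1$ or $1/2$. Since $x$ is injective (a standing assumption of this section), $t$ is a non-zero-divisor in $V$, so $P(0)\neq0$ and $x$ is invertible on $V$; moreover the characteristic polynomial of $x$ on $T$ equals that of multiplication by $t$ on $V=(R\otimes_{\ZZ_\ell}\QQ_\ell)^r$, namely $P^r$. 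Theorem~\ref{main1}, applied with $E=x$ and $Q=P^r$, then gives that $\Np(P^r)$ lies on or above $\Hp(x|T)$, which is the forward implication.

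For the converse, let $Y$ be a Young polygon all of whose slopes are $1/2$ or $1$ with $\Np(P^r)$ on or above it; then $\dim Y=2r$ (the horizontal length of $\Np(P^r)$), so $Y$ has exactly $a$ slopes equal to $1/2$ and $2(r-a)$ slopes equal to $1$ for some integer $0\le a\le r$. Inside $V=(R\otimes_{\ZZ_\ell}\QQ_\ell)^r$ I use two building blocks. The first is the order $R$ itself: $x$ induces on $R/\ell R\cong(S/\ell S)[t]/t^2(S/\ell S)[t]$ multiplication by $\Bar t$, a single Jordan cell of dimension $2$, so $\Hp(x|R)=\Hp(2)$. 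The second is the maximal order $\OO$, the integral closure of $S$ in $R\otimes_{\ZZ_\ell}\QQ_\ell$ (a product of finite separable field extensions of $L$, since $P$ is separable); it is a finite free $S$-module of rank $2$ containing $R$. If every root of $P$ in a fixed algebraic closure of $L$ has valuation $\ge1$, then the image of $t$ in $\OO$ lies in $\ell\OO$, hence $x$ acts by $0$ on $\OO/\ell\OO$ and $\Hp(x|\OO)=\Hp(1,1)$. Granting that the hypothesis on $Y$ forces every root of $P$ to have valuation $\ge1$ whenever $a<r$ (shown below), I take $T=R^{\oplus r}$ if $a=r$ and $T=\OO^{\oplus(r-a)}\oplus R^{\oplus a}\subset(R\otimes_{\ZZ_\ell}\QQ_\ell)^{r-a}\oplus(R\otimes_{\ZZ_\ell}\QQ_\ell)^a=V$ if $a<r$. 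In either case $T$ is an $R$-lattice in $V$, and as the Jordan cells of $x$ on $T/\ell T$ are the union of those on the summands, $\Hp(x|T)$ has $a$ cells of dimension $2$ and $2(r-a)$ of dimension $1$, i.e.\ $\Hp(x|T)=Y$.

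The one step needing genuine care is the claim that, under the hypothesis on $Y$, $a<r$ forces every root of $P$ to have valuation $\ge1$. Both roots of $P$ have positive valuation (as $\Bar P=t^2$). If some root had valuation $<1$, then $P$ could not split over $L$ (a root in $L$ has valuation in $\ZZ_{\ge1}$), so $P$ would be irreducible over $L$ and its two conjugate roots would share the valuation $\nu(P(0))/2$; being $<1$, this forces $\nu(P(0))=1$, and then $\Np(P^r)$ is exactly the segment from $(0,0)$ to $(2r,r)$. That segment has height $a+\tfrac12$ at the abscissa $2a+1\in(2a,2r)$, whereas $Y$ has height $a+1$ there, contradicting that $\Np(P^r)$ lies on or above $Y$. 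Hence the construction is always available. Everything else — invoking Theorem~\ref{main1}, the two Jordan-type computations on $R/\ell R$ and $\OO/\ell\OO$, and additivity of Young polygons under direct sums — is routine.
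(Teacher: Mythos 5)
Your proof is correct, and it follows the same overall strategy as the paper: Theorem~\ref{main1} together with $\Bar P=t^2$ gives the forward implication, and the converse is a direct-sum construction from rank-two $S$-lattices. The paper's converse is terser than yours. It decomposes the target nilpotent $N$ as $\oplus N_i$ with each $\dim\Hp(N_i)=2$ and then cites Theorem~\ref{main2} to produce, inside a copy of $R\otimes_{\ZZ_\ell}\QQ_\ell$, a lattice $T_i$ lifting each $N_i$. For a summand where $N_i$ is a $2\times2$ Jordan cell, the hypothesis of Theorem~\ref{main2} (that $\Np(P)$ lies on or above $\Hp(2)$) is automatic from $\Bar P=t^2$; but for a summand $N_i=0$ it amounts to requiring that $\Np(P)$ lie on or above $\Hp(1,1)$, i.e.\ that both roots of $P$ have valuation at least $1$, and the paper does not check this. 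Your valuation argument supplies exactly that missing step: if $Y$ has a slope equal to $1$ (so $a<r$) while some root of $P$ has valuation less than $1$, then $P$ must be irreducible over $L$ with $\nu(P(0))=1$, so $\Np(P^r)$ is the slope-$1/2$ segment from $(0,0)$ to $(2r,r)$, and comparing heights at abscissa $2a+1$ contradicts the hypothesis that $\Np(P^r)$ lies on or above $Y$. Your explicit identification of the two building blocks as $R$ (Jordan block $\Hp(2)$) and the maximal order $\OO$ (block $\Hp(1,1)$, valid once both roots have valuation at least $1$) is a concrete and correct alternative to invoking Theorem~\ref{main2} abstractly, and it makes the converse self-contained.
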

\begin{proof} Let $N$ be a nilpotent matrix with Young polygon $Y$.
Suppose such a lattice $T$ exists. Since $\deg P=2$, any Jordan
cell of $N$ has dimension at most $2$, thus all slopes of
$\Hp(N)$ are equal to $1/2$ or $1$. Conversely, there is a decomposition $N=\oplus N_i$
such that $\dim\Hp(N_i)=2$. By Theorem~\ref{main2} for any $i$ there exists an $S$-lattice $T_i$
such that $x$ acts on $T_i/\ell T_i$ with the matrix $N_i$. Put
$T=\oplus T_i$.
\end{proof}

We call a polynomial $f\in\ZZ_\ell[t]$ {\it distinguished} if $\Bar f$ is a power of an irreducible polynomial. We now use notation of section~\ref{end_rings}. Let $f_1\in\ZZ_\ell[t]$ be the minimal polynomial of $f$. Choose a root $\lambda$ of $\Bar f$ and its lifting $\alpha_\lambda\in S_\lambda$. By Proposition~\ref{red2}, $$R_\lambda=\ZZ_\ell[t]/f_1\ZZ_\ell[t]\cong S_\lambda[t]/gS_\lambda[t],$$ and $g\equiv(t-\lambda)^d\bmod\ell S_\lambda$. Put $Q_1(t)=g(t-\alpha_\lambda)$.  Note that $Q_1(t)$ divides $f(t-\alpha_\lambda)$ over $L_\lambda$.
Take a unitary polynomial $Q=Q_\lambda\in S_\lambda[t]$ of maximal degree such that $Q(t)$ divides $f(t-\alpha_\lambda)$ over $L_\lambda$, and the minimal polynomial of $Q$ is $Q_1$. We could define $Q$ in other way. Let $V$ be a $\QQ_\ell$--vector space endowed with semisimple linear transformation $F$ such that $f(t)=\det(t-F)$. Then $V$ is an $L_\lambda$--vector space, and $Q(t)$ is the characteristic polynomial of the $L_\lambda$--linear transformation $F-\alpha_\lambda$.

Recall that an \'etale group scheme is uniquely determined by the linear Frobenius action on the group of $\kk$-points.
Let $Y$ be a Young polygon of dimension $\deg Q$ such that $\Np(Q)$ lies on or above $Y$, and let $N$ be a nilpotent matrix such that $Y=\Hp(N)$. {\it A distinguished group scheme} is a finite \'etale group scheme  $A(f,Y)$ over $k$ such that $\dim_{\FF_\ell} A(f,Y)(\kk)=\deg f$, and $F$ acts on $A(f,Y)(\kk)$ with the matrix $J(\lambda,N)$ in some basis.
Note that for a given $f$ the polynomial $Q$ is uniquely determined modulo $\ell$. Thus $A(f,Y)$ is uniquely determined by $f$ and $Y$ up to an isomorphism.


\begin{corollary}\label{main_av_tors1}
Let $A$ be an abelian variety over $k$. Then $A[\ell]$ is isomorphic to a sum of distinguished group schemes.
\end{corollary}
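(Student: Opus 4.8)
The plan is to reduce the statement for $A[\ell]$ to the local pieces $T_\lambda/\ell T_\lambda$ produced in Section~\ref{end_rings} and then apply the lifting theorem (Theorem~\ref{main2}, in the easy direction coming from Theorem~\ref{main1}) to each piece. First I would invoke Proposition~\ref{red1}: the Tate module decomposes as $T_\ell(A)\cong\bigoplus_{\lambda\in\T}T_\lambda$, where $T_\lambda=R_\lambda T_\ell(A)$ and $F$ acts on $T_\lambda/\ell T_\lambda$ with matrix $J(\lambda,N_\lambda)$ in a suitable basis. Reducing modulo $\ell$ gives $A[\ell](\kk)\cong T_\ell(A)/\ell T_\ell(A)\cong\bigoplus_{\lambda\in\T}T_\lambda/\ell T_\lambda$ as $\FF_\ell$-vector spaces with Frobenius action, hence as finite \'etale group schemes over $k$ (using that such a group scheme is determined by the linear Frobenius action on its $\kk$-points).

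The next step is to identify each summand $T_\lambda/\ell T_\lambda$ with a distinguished group scheme. By Reduction step~2 and Proposition~\ref{red2}, $R_\lambda\cong S_\lambda[t]/gS_\lambda[t]$ with $g\equiv(t-\lambda)^d\bmod\ell S_\lambda$; choosing a lift $\alpha_\lambda\in S_\lambda$ of $\lambda$, the operator $F-\alpha_\lambda$ acts $S_\lambda$-linearly on $T_\lambda$ with characteristic polynomial $Q=Q_\lambda$ satisfying $Q\equiv t^d\bmod\ell$. Let $N_\lambda$ be the nilpotent matrix describing the action of $F-\alpha_\lambda$ on $T_\lambda/\ell T_\lambda$ (equivalently the nilpotent part of $J(\lambda,N_\lambda)$), and set $Y_\lambda=\Hp(N_\lambda)=\Hp(F-\alpha_\lambda\,|\,T_\lambda)$. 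Since $T_\lambda$ is an $R_\lambda$-lattice in $V_\lambda=V_\ell(A)_\lambda$ with $F-\alpha_\lambda$ semisimple of characteristic polynomial $Q$, Theorem~\ref{main1} gives that $\Np(Q)$ lies on or above $Y_\lambda$. Therefore $\dim_{\FF_\ell}T_\lambda/\ell T_\lambda=\deg Q=\deg f_\lambda$ and $F$ acts with matrix $J(\lambda,N_\lambda)$, so by definition $T_\lambda/\ell T_\lambda\cong A(f_\lambda,Y_\lambda)$ as group schemes over $k$, where $f_\lambda$ is the distinguished factor of $f$ attached to $\lambda$.

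Combining the two steps yields $A[\ell]\cong\bigoplus_{\lambda\in\T}A(f_\lambda,Y_\lambda)$, a direct sum of distinguished group schemes, which is the assertion. The only point requiring a little care is the bookkeeping that the matrix $J(\lambda,N_\lambda)$ appearing in Proposition~\ref{red1} is precisely the datum entering the definition of $A(f_\lambda,Y_\lambda)$, i.e.\ that the nilpotent matrix $N_\lambda$ coming from Theorem~\ref{JF} coincides with the one obtained by reducing $F-\alpha_\lambda$ on the lattice $T_\lambda$; this follows from Remark~\ref{rem1} and the construction of $Q_\lambda$, since the minimal polynomial of $F-\alpha_\lambda$ on $T_\lambda$ is $Q_1=g(t-\alpha_\lambda)$ and the reduction mod $\ell$ is compatible with the isomorphism $R_\lambda\cong S_\lambda[t]/gS_\lambda[t]$. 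I do not expect a genuine obstacle here: all the heavy lifting — the decomposition of the Tate module, the local structure of $R_\lambda$, and the polygon inequality — has already been done, and the corollary is essentially a matter of assembling these ingredients and matching definitions.
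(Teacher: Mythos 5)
Your proof is correct and follows essentially the same route as the paper: decompose $T_\ell(A)$ via Proposition~\ref{red1}, use Proposition~\ref{red2} to get the $S_\lambda$-module structure and the polynomial $Q_\lambda$, invoke Theorem~\ref{main1} to verify the Newton/Young polygon inequality required in the definition of a distinguished group scheme, and assemble $A[\ell]\cong\bigoplus_\lambda A(f_\lambda,\Hp(N_\lambda))$. The only difference is that you spell out a few bookkeeping compatibilities that the paper leaves implicit.
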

\begin{proof}
Let $f_A=\prod_{\lambda\in\T} f_\lambda$ be a product of pairwise coprime distinguished polynomials. By Proposition~\ref{red1}, $T_\ell(A)\cong\oplus_{\lambda\in\T} T_\lambda$. By Proposition~\ref{red2}, $R_\lambda\cong S_\lambda[t]/gS_\lambda[t]$. This gives a structure of an $S_\lambda$-module on $T_\lambda$. As before, let $\alpha_\lambda\in S_\lambda$ be a lifting of $\lambda$, and let
$Q_\lambda$ be the characteristic polynomial of the action of $\Fr-\alpha_\lambda$ on $T_\lambda$. By Theorem~\ref{main1}, $\Fr$ acts on $T_\lambda/\ell T_\lambda$ with the matrix $J(\lambda,N_\lambda)$, where $N_\lambda$ is a nilpotent matrix such that $\Np(Q_\lambda)$ lies on or above $\Hp(N_\lambda)$. Thus $A[\ell]\cong\oplus_{\lambda\in\T} A(f_\lambda,\Hp(N_\lambda))$.
\end{proof}

\begin{corollary}\label{main_av_tors2}
Let $A$ be an abelian variety over $k$. Suppose $f_A$ is separable, and $f_A=\prod_{\lambda\in\T} f_\lambda$ is a product of coprime distinguished polynomials. Then for any family of
distinguished group schemes $A(f_\lambda,Y_\lambda)$ there exists an abelian
variety $B$ isogenous to $A$ such that $B[\ell]\cong\oplus_{\lambda\in\T}
A(f_\lambda,Y_\lambda)$.
\end{corollary}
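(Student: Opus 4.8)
The plan is to realize the prescribed $\ell$-torsion by an explicit isogeny: by Lemma~\ref{lem_on_Tate_module}, producing an abelian variety $B$ isogenous to $A$ with $B[\ell]\cong\oplus_\lambda A(f_\lambda,Y_\lambda)$ amounts to exhibiting a full $F$-stable $\ZZ_\ell$-lattice $T\subset V_\ell(A)$ together with a Frobenius-equivariant isomorphism $T/\ell T\cong\oplus_\lambda A(f_\lambda,Y_\lambda)(\kk)$. I would build $T$ one $\lambda$-block at a time, each block coming from Theorem~\ref{main2}.

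First, tensoring the decomposition of Proposition~\ref{red1} with $\QQ_\ell$ gives an $F$-equivariant splitting $V_\ell(A)=\oplus_{\lambda\in\T}V_\lambda$. Since $f_A$ is separable, $F$ acts semisimply on $V_\ell(A)$ with minimal polynomial $f_A$, so $V_\ell(A)\cong\QQ_\ell[t]/f_A\QQ_\ell[t]$ and, by the Chinese remainder theorem, $V_\lambda\cong R_\lambda\otimes_{\ZZ_\ell}\QQ_\ell$ as $R_\lambda$-modules. Separability also forces $f(t-\alpha_\lambda)$, hence its divisor $Q_\lambda$, to be separable, so $Q_\lambda=Q_1$; thus Proposition~\ref{red2} identifies $R_\lambda\cong S_\lambda[t]/Q_\lambda(t)S_\lambda[t]$ with $t$ acting as $F-\alpha_\lambda$ and $Q_\lambda\equiv t^d\bmod\ell$, where $d=\deg Q_\lambda$. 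Now for each $\lambda$ the very existence of the distinguished group scheme $A(f_\lambda,Y_\lambda)$ means $\dim Y_\lambda=\deg Q_\lambda$ and $\Np(Q_\lambda)$ lies on or above $Y_\lambda$; Theorem~\ref{main2}, applied with $S=S_\lambda$, $Q=Q_\lambda$, $R=R_\lambda$, $V=V_\lambda$, then produces an $R_\lambda$-lattice $T'_\lambda\subset V_\lambda$ with $\Hp(F-\alpha_\lambda|T'_\lambda)=Y_\lambda$.

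Next, set $T=\oplus_{\lambda\in\T}T'_\lambda$; this is a full $F$-stable $\ZZ_\ell$-lattice in $V_\ell(A)$, and after replacing it by $\ell^nT$ for $n\gg0$ (which changes neither $T/\ell T$ nor the Frobenius action on it) we may assume $T\subset T_\ell(A)$. Lemma~\ref{lem_on_Tate_module} then provides an abelian variety $B/k$ and an isogeny $B\to A$ with $T_\ell(B)\cong T$, so $B[\ell](\kk)\cong T/\ell T\cong\oplus_\lambda T'_\lambda/\ell T'_\lambda$. Finally, each $T'_\lambda/\ell T'_\lambda$ is an $\FF_\ell(\lambda)$-vector space on which $F-\alpha_\lambda$ reduces to a nilpotent matrix $N_\lambda$ with $\Hp(N_\lambda)=Y_\lambda$; since $\alpha_\lambda$ reduces to $\lambda$ modulo $\ell$, the discussion of generalized Jordan cells shows $F$ acts on it by $J(\lambda,N_\lambda)$ in a suitable $\FF_\ell$-basis, which is exactly the defining property of $A(f_\lambda,Y_\lambda)$. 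As an \'etale group scheme is determined by the Frobenius action on its $\kk$-points, this yields $B[\ell]\cong\oplus_{\lambda\in\T}A(f_\lambda,Y_\lambda)$. The step needing genuine care is the second paragraph: checking that in the separable case $R_\lambda$ really is presented in the exact form $S[t]/Q(t)S[t]$ with $Q\equiv t^d\bmod\ell$ demanded by Theorem~\ref{main2} (this is where $Q_\lambda=Q_1$ and $V_\lambda\cong R_\lambda\otimes\QQ_\ell$ are used), and correctly matching the reduced transformation $F-\alpha_\lambda$ on $T'_\lambda/\ell T'_\lambda$ with the nilpotent part of a generalized Jordan cell; everything else is assembly from Propositions~\ref{red1}--\ref{red2}, Theorem~\ref{main2}, and Lemma~\ref{lem_on_Tate_module}.
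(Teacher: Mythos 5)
Your proposal is correct and follows essentially the same route as the paper's proof: decompose $V_\ell(A)$ via Proposition~\ref{red1}, apply Theorem~\ref{main2} on each $\lambda$-block using the identification from Proposition~\ref{red2}, take the direct sum, and invoke Lemma~\ref{lem_on_Tate_module}. You have supplied, carefully and correctly, several details the paper's four-sentence proof leaves tacit — in particular the verification that $V_\lambda\cong R_\lambda\otimes_{\ZZ_\ell}\QQ_\ell$ and that $Q_\lambda=Q_1$ (both consequences of separability, and both needed to put $R_\lambda$ in the exact shape Theorem~\ref{main2} requires), and the rescaling by $\ell^n$ to land $T$ inside $T_\ell(A)$.
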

\begin{proof}
By Proposition~\ref{red1}, $V_\ell(A)\cong\oplus_{\lambda\in\T} V_\lambda$, where
$V_\lambda=R_\lambda V_\ell(A)$ is an $R_\lambda$-module. Let $\alpha_\lambda$ be a lift of $\lambda$.
By Theorem~\ref{main2}, there exists an $S_\lambda$-lattice $T_\lambda\subset
V_\lambda$ such that $F-\alpha_\lambda$ acts on $T_\lambda/\ell T_\lambda$ with Young polygon $Y_\lambda$. 
Put $T=\oplus T_\lambda$. By Lemma~\ref{lem_on_Tate_module}, there exists a variety $B$ such that $T\cong T_\ell(B)$.
\end{proof}

\section{Young  polygons and duality.}
By $\widehat A$ we denote the dual variety of an abelian variety
$A$. Suppose $f$ is a distinguished polynomial
such that $f$ divides $f_A$, and polynomials $f$ and $f_A/f$ have
no common roots modulo $\ell$. By Proposition~\ref{red1}, there exists a
direct summand $T$ of $T_\ell(A)$ such that $F$ acts on $T$ with
characteristic polynomial $f$. The polynomial $\Hat f(t)=(\frac tq)^{\deg
f}f(\frac qt)$ divides $f_A$. Clearly, $\Hat f(t)$ is distinguished. Denote the corresponding direct summand of
$T_\ell(\widehat A)$ by $\widehat T$. 

Let $\lambda$ be a root of $\Bar f$, and let $S$ be the ring of integers in an unramified extension of $\QQ_\ell$ with residue field $\FF_\ell(\lambda)$. By Proposition~\ref{red2}, $T$ is an $S$-module. Clearly, $q/\lambda$ is a root of $\Hat f(t)$, and $\widehat T$ is an $S$-module too. 

\begin{prop}
Let $\alpha\in S$ be a lift of $\lambda$. Then $\Hp(F-\alpha|T)=\Hp(F-q/\alpha|\widehat T)$.
\end{prop}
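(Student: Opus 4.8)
The plan is to use the Weil pairing to realize $\widehat T$ as the $\ZZ_\ell$-linear dual of $T$ in a way that sends the Frobenius on $\widehat T$ to a twisted transpose of $F|_T$, and then to read off both Young polygons from the resulting $\FF_\ell[t]$-module structures on $T/\ell T$ and $\widehat T/\ell\widehat T$. Recall first the Weil pairing $e\colon T_\ell(A)\times T_\ell(\widehat A)\to\ZZ_\ell(1)$: it is perfect and $\ZZ_\ell$-bilinear, and from its compatibility with the Frobenius endomorphism — the identity $e(\psi x,y)=e(x,\psi^\vee y)$ for $\psi\in\End(A)$, the fact that the dual of the Frobenius is the Verschiebung, and the relation $\pi_A V_A=[q]$ — one gets $e(Fx,Fy)=q\,e(x,y)$, where $F$ denotes the Frobenius on either Tate module (the Tate twist is absorbed into this factor $q$ and plays no further role, since only Jordan types of nilpotent operators will matter). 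Since the constant term of $f_A$ is $\pm q^g$, an $\ell$-adic unit, $F$ acts invertibly on both Tate modules.

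Next I would show that $e$ restricts to a perfect pairing between the summands $T$ and $\widehat T$. Write $T_\ell(A)=T\oplus T^c$ and $T_\ell(\widehat A)=\widehat T\oplus\widehat T^c$ for the decompositions of Proposition~\ref{red1} (this uses $f_{\widehat A}=f_A$, so that $\widehat f$ is a Hensel factor of $f_{\widehat A}$), with $F$ having characteristic polynomial $f$ on $T$ and $\widehat f$ on $\widehat T$. The identity $e(Fx,Fy)=q\,e(x,y)$ forces $e$ to vanish on a pair of generalized $F$-eigenvectors unless the product of their eigenvalues is $q$; since $F$ acts semisimply on $V_\ell(A)$ and $V_\ell(\widehat A)$, the $F$-eigenvalues occurring in $T$ are the roots of $f$ and those occurring in $\widehat T^c$ are the roots of $f_A/\widehat f$. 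By the functional equation $t^{2g}f_A(q/t)=\pm q^g f_A(t)$ and the bijectivity of $\mu\mapsto q/\mu$ on $\overline{\FF}_\ell^\times$ (all roots involved being $\ell$-adic units), the hypothesis that $f$ and $f_A/f$ have no common root modulo $\ell$ becomes: $\widehat f$ and $f_A/\widehat f$ have no common root modulo $\ell$. Hence no pair of eigenvalues with product $q$ occurs across $T$ and $\widehat T^c$, so $e(T,\widehat T^c)=0$, and symmetrically $e(T^c,\widehat T)=0$. As $e$ is unimodular and the summands are orthogonal in this way, its Gram matrix in $\ZZ_\ell$-bases adapted to both decompositions is block diagonal with unimodular blocks; in particular $e$ restricts to a perfect pairing $T\times\widehat T\to\ZZ_\ell$. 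Equivalently, $e$ gives an isomorphism of $S$-modules $\widehat T\cong\Hom_{\ZZ_\ell}(T,\ZZ_\ell)$ under which $F|_{\widehat T}$ corresponds to $q\,({}^{t}E)^{-1}$, where $E=F|_T$ (invertible, since $f(0)$ is an $\ell$-adic unit).

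It then remains to do linear algebra. Reducing modulo $\ell$, $\widehat T/\ell\widehat T\cong\Hom_{\FF_\ell}(T/\ell T,\FF_\ell)$ with $F$ acting as $q\,({}^{t}\bar E)^{-1}$, $\bar E=E\bmod\ell$. Write $\Hp(F-\alpha|T)=\Hp(m_1,\dots,m_r)$; by Proposition~\ref{red1} and Theorem~\ref{JF}, $T/\ell T\cong\bigoplus_j\FF_\ell[t]/\bar P_\lambda(t)^{m_j}$ as an $\FF_\ell[t]$-module with $t$ acting as $\bar E$, where $\bar P_\lambda$ is the minimal polynomial of $\lambda$ over $\FF_\ell$ and $m_1,\dots,m_r$ are the sizes of the Jordan blocks of the nilpotent $\bar E-\lambda$. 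Since the $\FF_\ell[t]$-dual of $\FF_\ell[t]/h(t)$, with $t$ acting by transpose, is again $\FF_\ell[t]/h(t)$, the same holds for $\widehat T/\ell\widehat T$ as a module over $\FF_\ell[\,{}^{t}\bar E\,]$, but now with $F$ acting on each summand as multiplication by $q\,t^{-1}$. As $\lambda\ne 0$, $t$ is a unit in $\FF_\ell[t]/\bar P_\lambda(t)^{m_j}$, hence $\FF_\ell[q\,t^{-1}]=\FF_\ell[t]$ and this summand is still cyclic for $F$, the minimal polynomial of $F$ on it being $\bar P_{q/\lambda}(t)^{m_j}$, where $\bar P_{q/\lambda}$ is the minimal polynomial of $q/\lambda$ over $\FF_\ell$ (of the same degree as $\bar P_\lambda$) — one sees this by base changing to $\overline{\FF}_\ell$, where $q\,t^{-1}$ on the component attached to a conjugate $\lambda_i$ of $\lambda$ is $q/\lambda_i$ plus a single nilpotent Jordan block of size $m_j$. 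By Theorem~\ref{JF} the Frobenius therefore acts on $\widehat T/\ell\widehat T$ as $\bigoplus_j J(q/\lambda,\widehat N_j)$ with each $\widehat N_j$ a single nilpotent Jordan block of size $m_j$; taking $q/\alpha$ as the lift of $q/\lambda$, the reduction of $F-q/\alpha$ on $\widehat T/\ell\widehat T$ is then nilpotent with Jordan blocks of sizes $m_1,\dots,m_r$, so $\Hp(F-q/\alpha|\widehat T)=\Hp(m_1,\dots,m_r)=\Hp(F-\alpha|T)$.

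The main obstacle is the middle step: fixing the normalization of the Weil pairing and, above all, verifying the orthogonalities $e(T,\widehat T^c)=e(T^c,\widehat T)=0$, which is exactly where the hypothesis that $f$ and $f_A/f$ have no common root modulo $\ell$ is used — without it the complementary summands need not be orthogonal and $\widehat T$ need not be the $\ZZ_\ell$-dual of $T$. Once the isomorphism $\widehat T\cong\Hom_{\ZZ_\ell}(T,\ZZ_\ell)$ carrying $F|_{\widehat T}$ to $q\,({}^{t}E)^{-1}$ is established, the last step is routine bookkeeping with cyclic modules; alternatively one may invoke the elementary fact that multiplying a nilpotent operator by an invertible operator commuting with it changes neither the ranks of its powers nor, hence, its Jordan type, and observe that $(F-q/\alpha)\bmod\ell$ is, up to such a commuting invertible factor and a scalar, the transpose of the nilpotent $(F-\alpha)\bmod\ell$.
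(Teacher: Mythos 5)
Your proof is correct, and it reaches the same conclusion by a route that differs from the paper's in one essential respect. The paper, after observing that $e$ restricts to a non-degenerate pairing on $T\times\widehat T$, invokes the integral Deligne trick of~\cite[Lemma 3.1]{BGK} to upgrade $e$ to an $S$-bilinear pairing $e_S$ with $e=\Tr_{L/\QQ_\ell}\circ e_S$ and $e_S(Fx,Fy)=qe_S(x,y)$; this lets it work with $S/\ell S$-bases and read off the matrix relation $\widehat M^{t}=qM^{-1}$ directly in generalized Jordan form, from which the matching of cells follows. You instead stay with the $\ZZ_\ell$-bilinear pairing, reduce modulo $\ell$, and argue at the level of $\FF_\ell[t]$-module (invariant factor) theory: the dual of the cyclic module $\FF_\ell[t]/\bar P_\lambda^{m_j}$ is again cyclic, and $F=q\,t^{-1}$ on it has minimal polynomial $\bar P_{q/\lambda}^{m_j}$, giving the same Young polygon. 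Your approach avoids the BGK lemma at the modest cost of a cyclic-module computation; the paper's buys a cleaner matrix identity at the cost of an external citation. Your orthogonality argument (that $e(T,\widehat T^c)=0$ because no pair of eigenvalues across those summands has product $q$, using the functional equation and the hypothesis on $f$ and $f_A/f$ modulo $\ell$) is also spelled out in more detail than the paper's one-line assertion, which is a genuine improvement. One small inaccuracy: you assert that $e$ yields an isomorphism \emph{of $S$-modules} $\widehat T\cong\Hom_{\ZZ_\ell}(T,\ZZ_\ell)$, but the pairing is not $R$-balanced (rather $e(Fx,y)=qe(x,F^{-1}y)$), so the two $S$-structures need not literally match under $e$; fortunately your subsequent argument uses only the $\ZZ_\ell$-linear duality and the $\FF_\ell[t]$-module structure after reduction, so nothing is lost.
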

\begin{proof}
The Weil pairing $e:T_\ell(A)\times T_\ell(\widehat A)\to
\ZZ_\ell$ is non-degenerate, and $e(Fx,Fy)=q e(x,y)$, where $x\in T_\ell(A)$
and $y\in T_\ell(\widehat A)$~\cite{Mum}. This yields that its restriction to
$T\times \widehat T$ is non-degenerate. 
By an integral version of Deligne trick~\cite[Lemma 3.1]{BGK},
there exists an $S$-linear pairing $e_S:T\times \widehat T\to S$
such that $e_S(Fx,Fy)=q e_S(x,y)$, and $e=Tr_{L/\QQ_\ell}\circ
e_S$, where $L$ is the fraction field of $S$. We have
$$e_S(Fx,y)=e_S(Fx,F(F^{-1}y))=e_S(x,(qF^{-1})y).$$ Let
$M=J(\lambda,N)$ be the matrix of the action of $F$ on $T/\ell T$ in some basis
over $S/\ell S$, and let $\widehat M$ be the matrix of the action
of $F$ on $\widehat T/\ell \widehat T$ in the dual basis. It
follows that $\widehat M^{t}=qM^{-1}$, where ${\cdot}^t$ means
transpose. One easily proves that for any cell of $M$ corresponding to the
Jordan cell of dimension $d$ there exists a cell of $\widehat M$
corresponding to the same Jordan cell. The proposition follows.
\end{proof}

We now give an example of an abelian surface $A$ such that the
group of points $A(k)$ is not isomorphic to the group of points on
the dual surface $\widehat A(k)$. Recall that $A(k)$ is a kernel
of $1-F: A\to A$, and the $\ell$-component $A(k)_\ell=\ker (1-F):
T_\ell(A)\to T_\ell(A)$.

\begin{ex} Let $q=7$, and let $\ell=5$. Suppose $f_a(t)=t^2+2t+7$ and
$f_b(t)=t^2-3t+7$ are Weil polynomials of two elliptic curves. The
polynomial $f=f_af_b$ is the Weil polynomial of an abelian
surface. Note that $f_a(t)\equiv f_b(t)\equiv (t-1)(t-q)\mod 5$.
Thus we have a decomposition $f=f_1f_2$ over $\ZZ_5$, where
$f_1\equiv(t-1)^2\mod 5$, and $f_2\equiv(t-q)^2\mod 5$.  For any
abelian surface $B$ with Weil polynomial $f$ we have a
decomposition $T_5(B)\cong T_1\oplus T_2$, where $F$ acts on $T_i$
with characteristic polynomial $f_i$ for $i=1,2$. By
Theorem~\ref{main1}, $F-1$ acts on $T_1/5T_1$ trivially and by
Theorem~\ref{main2}, there exists a lattice $T$ in $T_2\otimes\QQ$
such that $F-q$ acts on $T/5T$ non-trivially. In the first case the
Young  polygon of $F-1$ is $\Hp(2)$, and in the second case the
Young polygon of $F-q$ is $\Hp(1,1)$. By
Lemma~\ref{lem_on_Tate_module}, there exists an abelian surface $A$
such that $T_5(A)\cong T_1\oplus T_2$. By the previous proposition,
$A(\FF_7)_5\cong \ZZ/5\ZZ\oplus\ZZ/5\ZZ$, and $\widehat
A(\FF_7)_5\cong \ZZ/25\ZZ$.
\end{ex}

\section{Matrix factorizations.}\label{mf}
In this section we turn our attention to the case when the Weil polynomial is not separable. 
First we establish a connection between matrix factorizations and Tate modules.
Given a Tate module $T$ with the Weil polynomial $f$ and the minimal polynomial $f_1$
one can produce the unique (up to isomorphism) Tate module $T'$ with the Weil polynomial $g=f_1^r/f$ for some $r$. If we are lucky, $g$ do not have multiple roots. 
In this case, one can get some information on $T$ from $T'$ using theorem~\ref{main1}. Moreover, one can reverse the construction and produce 
a Tate module $T$ with a given Young polygon starting from $T'$ constructed using theorem~\ref{main2}.

Let $S$ be the ring of integers in a finite unramified extension
$L$ of $\QQ_\ell$. Fix a pair of polynomials $f, f_1\in S[t]$ and
a positive integer $r$.
Let $R=S[t]/f_1S[t]$, and let $\SS=S/\ell S$. Denote by $x\in R$
the image of $t$ under the natural projection from $S[t]$.
We assume that $f_1\equiv t^{d_1}\mod\ell$, and $\deg f_1=d_1$.

\begin{deff}
A {matrix factorization} (with respect to $f, f_1$ and $r$) is a pair $(X,Y)$ of $r\times r$ matrices
with coefficients in $S[t]$ such that $YX=f_1\cdot I_r$ and $\det
X=f$.
\end{deff}

Suppose we are given a matrix factorization $(X,Y)$.
The matrix $X$ defines a map of free $S[t]$ modules:
$$S[t]^r \xrightarrow{X} S[t]^r.$$ 
Its cokernel $T$ is annihilated by $f_1$. It is equivalent to say that $T$ is an $R$-module.
We see that the matrix factorization $(X,Y)$ corresponds to a finitely generated
$R$-module $T$ given by the presentation:
\begin{equation}\label{eq_mf}
S[t]^r \xrightarrow{X} S[t]^r \to T \to 0.
\end{equation}

\begin{prop}\label{prop_mf1}
The module $T$ is free of finite rank $d$ over $S$, and characteristic
polynomial of the action of $x$ on $T$ is equal to $f$.
\end{prop}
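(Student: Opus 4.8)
The plan is to read off the structure of $T$ directly from the presentation~(\ref{eq_mf}) together with the two defining equations $YX = f_1 I_r$ and $\det X = f$. First I would show $T$ is finitely generated and torsion over $S[t]$: since $YX = f_1 I_r$, multiplication by $f_1$ on $S[t]^r$ factors through $X$, hence kills the cokernel $T$; thus $T$ is an $R = S[t]/f_1 S[t]$-module, and in particular a module over $R$, which is finite free over $S$ because $f_1$ is monic of degree $d_1$. So $T$ is a finitely generated $S$-module.

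Next I would pin down the $S$-rank and kill the $S$-torsion by base-changing to the fraction field $L$. Tensoring~(\ref{eq_mf}) with $L$ gives $L[t]^r \xrightarrow{X} L[t]^r \to T\otimes_S L \to 0$, and since $\det X = f \neq 0$ in $L[t]$ the map $X$ is injective with cokernel $L[t]^r / X L[t]^r$ of dimension $\deg(\det X) = \deg f =: d$ over $L$ (the standard Smith-normal-form / elementary-divisor computation over the PID $L[t]$: the cokernel of a square matrix over $L[t]$ with nonzero determinant has $L$-dimension equal to the valuation of the determinant at the relevant places, i.e. $\deg\det X$). Hence $T\otimes_S L$ has $L$-dimension $d$. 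To see that $T$ itself is free of rank $d$ over $S$: $T$ is a finitely generated module over the complete DVR $S$, hence $T \cong S^d \oplus (\text{torsion})$; I must rule out torsion. For this I would argue that $X: S[t]^r \to S[t]^r$ is injective — it is injective after $\otimes L$, and $S[t]^r$ is $\ell$-torsion-free, so $X$ is injective over $S[t]$ — and therefore~(\ref{eq_mf}) is a length-one free resolution of $T$ over $S[t]$. Alternatively, and more cleanly, I would note that $T$ has projective dimension $\le 1$ over $S[t]$ with $S[t]$ regular of dimension $2$, and that the support of $T$ in $\Spec S[t]$ is cut out by $\det X = f$, which is coprime to $\ell$ modulo nothing — wait, that is false here, so I instead use: $T$ is $\ell$-torsion-free because if $\ell v \in X S[t]^r$ then, writing the adjugate identity $\tilde X X = (\det X) I_r = f I_r$, one gets $f$ divides $\ell$ times a preimage, and since $f$ is monic and $S[t]/\ell = \SS[t]$ is a domain in which $\bar f = t^d \neq 0$, a short divisibility argument in $S[t]$ forces the preimage itself to lie in $X S[t]^r$. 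Thus $T$ is $\ell$-torsion-free and finitely generated over $S$, hence free; its rank is $\dim_L(T\otimes_S L) = d$.

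Finally, for the characteristic polynomial of $x$ acting on $T$: I would compute $\det(t I_d - x \mid T)$ after $\otimes_S L$, where it suffices to identify it with $\det(t I_d - x \mid T\otimes_S L)$ (both sides are monic of degree $d$ in $S[t]$ and agree over $L$). Over $L$, $T\otimes_S L \cong L[t]^r / X\cdot L[t]^r$ as an $L[t]$-module, and the characteristic polynomial of multiplication by $t$ on a torsion $L[t]$-module with presentation matrix $X$ is, by the elementary divisor theorem over $L[t]$, equal to the product of the invariant factors of $X$, i.e. to $\det X = f$, up to a unit of $L^\times$; since both the characteristic polynomial and $f$ are monic, they are equal. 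Hence the characteristic polynomial of $x$ on $T$ is $f$.

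I expect the main obstacle to be the torsion-freeness of $T$ over $S$, i.e. promoting $T$ from "finitely generated over $S$ of generic rank $d$" to "free of rank $d$"; the cleanest route is the injectivity of $X$ over $S[t]$ combined with the adjugate relation $\tilde X X = f I_r$ and the fact that $\bar f = t^d$ is a nonzerodivisor in $\SS[t]$. Everything else is the standard Smith normal form / Cayley–Hamilton dictionary for modules presented by a square matrix over a polynomial ring, carried out over $L[t]$ and then descended to $S[t]$ using monicity of the polynomials involved.
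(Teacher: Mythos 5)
Your proposal is correct, and it reaches the same conclusion by a slightly different route for the key torsion-freeness step. The paper first tensors the presentation~(\ref{eq_mf}) with $\SS[t]$ and uses Smith normal form over the PID $\SS[t]$ to write $T\otimes_S\SS\cong\oplus_i\SS[t]/t^{m_i}\SS[t]$ with $\sum m_i=d$; Nakayama then bounds the number of $S$-generators of $T$ by $d$, and comparing with $\dim_L(T\otimes_S L)=d$ (also via Smith normal form, over $L[t]$) forces the $S$-torsion to vanish. You instead compute $\dim_L(T\otimes_S L)=d$ the same way, but prove $\ell$-torsion-freeness directly by the adjugate identity: from $\ell v=Xw$ one gets $\ell\tilde Xv=fw$, and since $\bar f=t^d$ is a nonzerodivisor in $\SS[t]$ this forces $w\in\ell S[t]^r$ and hence $v\in XS[t]^r$. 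This is a clean, self-contained argument that avoids the mod-$\ell$ Smith normal form entirely; the trade-off is that the paper's route also produces the explicit decomposition of $T/\ell T$ (the exponents $m_i$), which is exactly the data one wants for the Young polygon in the surrounding discussion, whereas your route establishes only freeness and the rank. Your computation of the characteristic polynomial over $L[t]$ matches the paper's. One cosmetic note: your ``alternatively'' aside about projective dimension and supports is a dead end that you correctly abandon mid-sentence; in a final write-up you should simply delete it and keep only the adjugate argument.
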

\begin{proof}
Since $f_1\equiv t^{d_1}\mod\ell$, and $\deg f_1=d_1$, the ring $R$
is generated as $S$-module by the elements $1,x,\dots,x^{d_1-1}$.
By definition, $T$ is a finitely generated $R$-module, thus it is
finitely generated over $S$.

Take the tensor product of the presentation ${\rm(\ref{eq_mf})}$
with $\SS[t]$: $$\SS[t]^r \xrightarrow{\bar X} \SS[t]^r \to
T\otimes_S\SS \to 0.$$ The ring $\SS[t]$ is a principal ideal
domain, thus there exist matrices $M_1$ and $M_2$ over $\SS[t]$
such that $\det M_1=\det M_2=1$ and $M_1\bar XM_2$ is the diagonal
matrix with determinant $t^d$. It follows that $M_1\bar XM_2$ is the
diagonal matrix $\diag(t^{m_1},\dots,t^{m_r})$ for some
$m_1\dots,m_r\in\NN$ such that $\sum m_i=d$. We get
$$T\otimes_S\SS\cong\oplus_{i=1}^r
\SS[t]/t^{m_i}\SS[t].$$ By the Nakayama lemma, $T$ is generated by $d$
elements over $S$.

Now take the presentation of $T\otimes_SL$:
$$L[t]^r \xrightarrow{X} L[t]^r \to
T\otimes_SL \to 0.$$ As before, there exist matrices $M_3$ and
$M_4$ over $L[t]$ such that $\det M_3=\det M_4=1$, and
$M_3XM_4=\diag(g_1,\dots,g_r)$. Clearly,
$$T\otimes_SL\cong\oplus_{i=1}^r L[t]/g_iL[t],$$
and $\rk T=d$. This proves that $T$ is free over $S$. To conclude
the proof we note that the characteristic polynomial of the
action of $x$ on $L[t]/g_iL[t]$ is equal to $g_i$.
\end{proof}

The following proposition shows that modules over $R$ give rise to
matrix factorizations.

\begin{prop}\label{prop_mf2}
Let $T$ be an $R$-module which is free of finite rank over $S$.
Suppose that $T$ is generated over $R$ by $r$ elements, and
that $\Hp(x|T)=\Hp(m_1,\dots,m_r)$. Then there exists a matrix
factorization $(X,Y)$ such that $T$ has presentation
${\rm(\ref{eq_mf})}$, and $$X\equiv
\diag(t^{m_1},\dots,t^{m_r})\mod\ell.$$
\end{prop}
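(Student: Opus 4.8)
The plan is to construct the presentation matrix $X$ by hand from a Jordan basis of $\bar x$ on $T/\ell T$, read off its reduction modulo $\ell$ and the shape of its determinant, and then obtain $\det X=f$, the matrix $Y$ and the exactness of the presentation ${\rm(\ref{eq_mf})}$ by formal arguments. Write $d=\rk_S T=\sum_{j=1}^r m_j$. Since $\Hp(x|T)=\Hp(m_1,\dots,m_r)$, the $\SS[t]/(t^{d_1})$-module $T/\ell T$ is isomorphic to $\bigoplus_{j=1}^r\SS[t]/(t^{m_j})$; pick a generator $\bar v_j$ of the $j$-th summand, so that $\{\bar x^k\bar v_j:1\le j\le r,\ 0\le k<m_j\}$ is an $\SS$-basis of $T/\ell T$ and $\bar x^{m_j}\bar v_j=0$. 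Lift $\bar v_1,\dots,\bar v_r$ to $v_1,\dots,v_r\in T$. The $d$ elements $x^kv_j$ ($0\le k<m_j$) reduce modulo $\ell$ to this basis, hence generate $T$ over the local ring $S$ by Nakayama; being $d$ generators of a free $S$-module of rank $d$, they form an $S$-basis of $T$. In particular $v_1,\dots,v_r$ generate $T$ over $R$, giving a surjection $\phi\colon S[t]^r\to T$, $e_j\mapsto v_j$.

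\emph{The matrix $X$.} From $\bar x^{m_i}\bar v_i=0$ we get $x^{m_i}v_i\in\ell T$; expanding in the $S$-basis $\{x^kv_j\}$ gives $x^{m_i}v_i=\ell\sum_{j=1}^r p_{ij}(x)v_j$ with $p_{ij}\in S[t]$, $\deg p_{ij}<m_j$. Let $X$ be the $r\times r$ matrix over $S[t]$ whose $i$-th column is $t^{m_i}e_i-\ell\sum_j p_{ij}(t)e_j$, i.e.\ $X_{ji}=\delta_{ij}t^{m_i}-\ell\,p_{ij}(t)$. By construction every column of $X$ lies in $\ker\phi$ and $X\equiv\diag(t^{m_1},\dots,t^{m_r})\bmod\ell$. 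The degree bound $\deg p_{ij}<m_j$ forces each product $\prod_i X_{\sigma(i),i}$ to have degree $\le\sum_j m_j=d$, with equality only for $\sigma=\id$, where that product is monic of degree $d$; hence $\det X$ is monic of degree $d$, in particular nonzero, so $X$ is injective on $S[t]^r$ (and on $L[t]^r$).

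\emph{Identifying the cokernel.} Put $T'=S[t]^r/XS[t]^r$ and let $\bar\phi\colon T'\to T$ be the induced surjection. Tensoring $0\to S[t]^r\xrightarrow{X}S[t]^r\to T'\to 0$ with $\SS$ over $S$ and using that $\bar X=\diag(t^{m_i})$ is injective on $\SS[t]^r$, we get $\mathrm{Tor}_1^S(T',\SS)=\ker\bar X=0$, so $T'$ has no $\ell$-torsion and hence (as $S$ is a discrete valuation ring) is $S$-torsion free. Tensoring the same sequence with $L$ over $S$ shows $\dim_L(T'\otimes_S L)=\deg\det X=d=\dim_L(T\otimes_S L)$, so $\bar\phi\otimes_S L$ is an isomorphism; thus $\ker\bar\phi$ is $S$-torsion, and being a submodule of the torsion-free $T'$ it vanishes. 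Therefore $\bar\phi$ is an isomorphism: the presentation ${\rm(\ref{eq_mf})}$ holds, and $(\det X)=\mathrm{Fitt}^{S[t]}_0(T)$, which is generated by the characteristic polynomial of $x$ on $T$; since both are monic of degree $d$ this yields $\det X=f$. Finally, $f_1$ annihilates the $R$-module $T$, so $f_1e_i\in\ker\phi=XS[t]^r$ for each $i$; assembling preimages into a matrix $Y$ gives $XY=f_1I_r$, and injectivity of $X$ over the domain $S[t]$ upgrades this to $YX=f_1I_r$. Hence $(X,Y)$ is the required matrix factorization.

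\emph{Main difficulty.} The only delicate step is the isomorphism $T'\cong T$, i.e.\ that the $r$ evident relations $x^{m_i}v_i\equiv 0\bmod\ell T$ already generate all relations among $v_1,\dots,v_r$. Nakayama's lemma is unavailable over $S[t]$ since $(\ell)$ is not contained in its Jacobson radical, so this must be obtained indirectly; the argument above does so by playing the $S$-torsion-freeness of $T'$ (which rests on the special shape $\bar X=\diag(t^{m_i})$, hence on the Jordan-basis choice of generators) against a dimension count after inverting $\ell$ (which rests on the degree bound $\deg p_{ij}<m_j$ forcing $\deg\det X=d$). Everything else — the congruence mod $\ell$, the identification $\det X=f$ via Fitting ideals, and the construction of $Y$ — is routine bookkeeping.
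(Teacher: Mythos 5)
Your proof is correct and follows the same overall strategy as the paper's: lift a Jordan basis of $T/\ell T$ to $R$-generators $v_1,\dots,v_r$ of $T$, encode the relations $x^{m_i}v_i\equiv 0\pmod{\ell T}$ in a matrix $X$ with $X\equiv\diag(t^{m_i})\pmod\ell$, identify $T$ with $\mathrm{coker}(X)$ by a rank count, and finally lift $f_1$ through $X$ to obtain $Y$. Where you genuinely diverge is in the middle step. The paper appeals to Proposition~\ref{prop_mf1} to conclude that $T'=\mathrm{coker}(X)$ is $S$-free of the right rank and that $\det X=f$, but that proposition as stated takes a full matrix factorization $(X,Y)$ with $\det X=f$ as input, neither of which is in hand at that point; the paper is really reusing the argument inside the proof of Proposition~\ref{prop_mf1} (Smith normal form over $\SS[t]$ and $L[t]$) rather than the statement. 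You replace this with a self-contained argument: $\mathrm{Tor}_1^S(T',\SS)=\ker\bar X=0$ (using only that $\bar X$ is diagonal with nonzero entries, hence injective over the domain $\SS[t]$) gives $\ell$-torsion-freeness, and a dimension count over $L$ gives rank $d$; the two together kill $\ker\bar\phi$. This buys a cleaner logical flow, and also sidesteps the rank-comparison over $\SS$ that Proposition~\ref{prop_mf1} performs. You are also more careful than the printed proof in two small but real ways: you make explicit that the $v_i$ must lift a Jordan basis (the paper takes arbitrary $R$-generators and tacitly assumes the $a_{ji}$ are divisible by $\ell$), and you justify the passage from $XY=f_1I_r$ to $YX=f_1I_r$ by injectivity of $X$ over the domain $S[t]$, which the paper's commutative-diagram argument leaves implicit. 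The Fitting-ideal derivation of $\det X=f$ is a fine alternative to the paper's invocation of Proposition~\ref{prop_mf1}; one could also just read off the characteristic polynomial of $x$ on $T'\otimes_S L$ from the Smith normal form. Overall this is a tightened version of the paper's own proof rather than a different one.
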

\begin{proof}
Let $v_1,\dots, v_r$ be generators of $T$ over $R$. Then
$x^{m_i}v_i=\sum_j a_{ji}(x)v_j$, where $a_{ji}\in S[t]$ and $\deg
a_{ji}< m_j$. Let $X$ be the matrix with the entries
$t^{m_i}\delta_{ji}-a_{ji}$. Define an $R$-module $T'$ by the
presentation: $$S[t]^r \xrightarrow{X} S[t]^r \to T' \to 0.$$ Put
$m=\sum_i m_i$. Then $\det X\equiv t^m\mod\ell$, and from the
inequalities $\deg a_{ji}< m_j$ it follows that $\det X$ is a
polynomial of degree $m$. By Proposition~\ref{prop_mf1}, $T'$ is a
free $S$ module of rank $m$. By definition of $T'$, we have a
surjective map of $S$-modules $T'\to T$. Since they have the same
rank as $S$-modules, this map is an isomorphism, and, by
Proposition~\ref{prop_mf1}, $\det X=f$.

Multiplying presentation ${\rm(\ref{eq_mf})}$ by $f_1$ we get the
commutative diagram
$$\xymatrix{
S[t]^r \ar[r]^{X}\ar[d]_{f_1} & S[t]^r \ar[d]^{f_1} \ar[r] &T \ar[r] \ar[d]_{0}&0 \\
S[t]^r \ar[r]^{X} & S[t]^r \ar[r] &T \ar[r] &0 } $$

Since $S[t]^r$ is free, there exists a matrix $Y$ such that the
diagram
$$\xymatrix{
S[t]^r \ar[r]^{X} & \ar[dl]_{Y} S[t]^r \ar[d]^{f_1} \ar[r] &T \ar[r] \ar[d]_{0}&0 \\
S[t]^r \ar[r]^{X} & S[t]^r \ar[r] &T \ar[r] &0 } $$ commutes. It
follows
that $YX=f_1 I_r$. 
Thus, the pair $(X,Y)$ is a matrix factorization.
\end{proof}

\begin{ex}
Let $\deg f_1=3$, and let $f=f_1^2$. Suppose $f_1$ is separable. 
When there exists an $R$-module $T$ such that $x$ acts with $r=3$ Jordan cells of dimension $2$? 
By Proposition~\ref{prop_mf2}, such a module exists iff there exists a matrix factorization $(X,Y)$ such that
$$X\equiv \diag(t^2,t^2,t^2)\mod\ell.$$ The matrix
factorization $(Y,X)$ gives a module $T'$ over $R$ which is
generated by $3$ elements and the characteristic polynomial of $x$
is equal to $\det Y=f_1^3/f=f_1$. Moreover, $Y\equiv
\diag(t,t,t)\mod\ell.$ It follows that $\Hp(x|T')=\Hp(1,1,1)$. By
Theorems~\ref{main1} and~\ref{main2}, such a module $T'$ exists iff
$\Np(f_1)$ lies on or above $\Hp(x|T')$. Thus $T$ exists iff $\Np(f_1)$ lies on or above $\Hp(1,1,1)$.
\end{ex}

\section{$\ell$-torsion of abelian surfaces.}\label{sec_surf_tors}
In this section we classify isomorphism classes of $\ell$-torsion subschemes of abelian surfaces.
We use the following notation. 
Let $P=\prod_{\lambda\in\T}P_\lambda$ be the decomposition of a polynomial $P(t)\in\ZZ_\ell[t]$ into a product of distinguished polynomials. Then $A(P,0)$ is the group scheme $\oplus_{\lambda\in\T} A(P_\lambda,Y_{\deg P_\lambda})$, where $Y_n$ is the Young polygon of the zero matrix of dimension $n$. 

\begin{thm}\label{surf_tors}
Let $A$ be an abelian surface over $k$ with the Weil polynomial
$f_A(t)=t^4+a_1t^3+a_2t^2+qa_1t+q^2$. Suppose first that $f_A$ is separable, then we have the following five cases:
\begin{description}
\item[(1)] if $f_A(t)$ is separable modulo $\ell$, then $A[\ell]\cong A(f_A,0)$;
\item[(2)] if $f_A(t)\equiv f_1f_2 \bmod\ell$, where
$f_1\equiv(t-\alpha)^2\bmod\ell$, and $f_2$ is separable modulo
$\ell$, then $A[\ell]\cong A(f_2,0)\oplus A(f_1,Y)$, where $\dim Y=2$;
\item[(3)] if $f_A(t)\equiv f_1f_2 \bmod\ell$, where
$f_i\equiv (t-\alpha_i)^2\bmod\ell$, for $i=1,2$ and
$\alpha_1\not\equiv\alpha_2\mod\ell$, then $A[\ell]\cong
A(f_1,Y_1)\oplus A(f_2,Y_2)$, where $\dim Y_1=\dim Y_2=2$;
\item[(4)] if $f_A(t)\equiv h(t)^2\bmod\ell$, where $h$ is irreducible modulo
$\ell$, then $A[\ell]\cong A(f_A,Y)$, where $\dim Y=2$. If $\ell\neq 2$, and $\ell^2$ does not divide
$a_1^2-4a_2+8q$, or $\ell=2$, and $4$ does not divide
$a_1+a_2+1-2q$, then $Y=\Hp(2)$;
\item[(5)] if $f_A(t)\equiv(t-\alpha)^4\bmod\ell$, then
$A[\ell]\cong A(f,Y)$, where $\dim Y=4$.
\end{description}

Suppose that $f_A$ is not separable, then we have the following
three cases:
\begin{description}
\item[(6)] $f_A=P^2$, where $P$ is separable. Then
\begin{description}
\item[(a)] if $P$ is separable modulo $\ell$, then
$A[\ell]\cong A(P,0)\oplus A(P,0)$;
\item[(b)] if $P(t)\equiv (t-\alpha)^2\bmod\ell$, then
$A[\ell]\cong A(P,Y_1)\oplus A(P,Y_2)$, where $\dim Y_1=\dim Y_2=2$.
\end{description}

\item[(7)] $f_A(t) = (t\pm\sqrt{q})^2(t^2 - b t + q)$, where
$P_1(t)=t^2 - b t + q$ is separable. Let $P_2(t) =
(t\pm\sqrt{q})^2$.

\begin{description}
\item[(a)] If $P_1\not\equiv P_2\bmod\ell$, and $P_1$ is separable modulo $\ell$, then $A[\ell]\cong A(P_1,0)\oplus A(P_2,0)$.
\item[(b)]If $P_1\not\equiv P_2\bmod\ell$, and
$P_1(t)\equiv (t-\alpha)^2\bmod\ell$, then $A[\ell]\cong
A(P_1,Y)\oplus A(P_2,0)$, where $\dim Y=2$.
\item[(c)]If $P_1\equiv P_2\bmod\ell$, then
\begin{description}
\item[(i)]either $A[\ell]\cong
A(P_1(t)(t\pm\sqrt{q}),Y)\oplus A(t\pm\sqrt{q},0)$, where $\dim Y=3$; or
\item[(ii)]
if $\ell^2$ divides $P_1(\mp\sqrt{q})$, then $A[\ell]\cong
A(P_1,Y)\oplus A((t\pm\sqrt{q})^2,Y)$, where $Y=\Hp(2)$.
\end{description}
\end{description}

\item[(8)] If $f_A(t)=(t\pm\sqrt{q})^4$, then $A[\ell]\cong A(f_A,0)$.
\end{description}
Conversely, for any group scheme $G$ described above there exists
an abelian variety $B$ in the isogeny class of $A$ such that
$B[\ell]\cong G$.
\end{thm}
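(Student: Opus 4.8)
The plan is to treat the separable and non-separable cases by different machinery, exactly as set up in the earlier sections. For the separable cases (1)--(5), I would start from the factorization $f_A = \prod_{\lambda\in\T} f_\lambda$ into coprime distinguished polynomials over $\ZZ_\ell$; Proposition~\ref{red1} then splits $T_\ell(A)$ accordingly, and Corollaries~\ref{main_av_tors1} and~\ref{main_av_tors2} reduce everything to: for each $\lambda$, which Young polygons $Y$ of dimension $\deg f_\lambda$ satisfy $\Np(Q_\lambda)$ on or above $Y$? The five cases are just the five possible shapes of the factorization of $\bar f_A$ (four linear factors; one double plus two simple or one double linear factor of a quadratic; two double linear factors; a square of an irreducible quadratic; a fourth power of a linear factor). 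In cases (1)--(3) every distinguished piece has $\deg f_\lambda\le 2$, so the only freedom is the $\deg 2$ pieces, where $Y$ is forced to have dimension $2$ and $\Np(Q_\lambda)$ automatically lies on or above the polygon with a single slope $1$; so the statement is immediate from the corollaries. Case (4) is where I expect to do real work: here $\deg f_A=4$ but $f_A$ is already distinguished with $\bar f_A=h^2$, $\deg h=2$, so I invoke Proposition~\ref{np_deg2}: the admissible $Y$ are those with slopes in $\{1/2,1\}$ lying below $\Np(Q)$, and I must compute $\Np(Q)$ explicitly. The point $(0,0)$, $(2,\nu(Q_2))$, $(4,0)$ structure of $\Np(Q)$ forces $\dim Y=2$, and the refined claim ($Y=\Hp(2)$ unless a congruence condition holds) comes from computing $\nu(Q_2)$ in terms of $a_1,a_2,q$; the discriminant-type quantity $a_1^2-4a_2+8q$ (resp. $a_1+a_2+1-2q$ when $\ell=2$) is precisely the second coefficient of $Q$ after the change of variable $t\mapsto t-\alpha$ with $\alpha$ a square root of the relevant root of $\bar h$, and its $\ell$-valuation being $\ge 2$ is exactly the condition for $\Np(Q)$ to drop to the line through $(0,0)$ and $(4,0)$, allowing $Y=\Hp(1,1)$. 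I would carry out this valuation computation carefully, separating $\ell$ odd from $\ell=2$ because of the $2$ in the square root and in the discriminant. Case (5) is again immediate: $\dim Y=4$ is forced and no further constraint survives.

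For the non-separable cases, I would use the matrix factorization machinery of Section~\ref{mf}. Case (6): $f_A=P^2$ with $P$ separable of degree $2$; $T_\ell(A)$ is then a module over $R=\ZZ_\ell[t]/P_1$ which is generated by $2$ elements, so by the same splitting argument the Young polygon of each Frobenius-eigenvalue block decomposes into pieces of dimension exactly $2$; Proposition~\ref{np_deg2} (applied with $r=2$) gives that any such $Y$ occurs, yielding (a) and (b) according to whether $\bar P$ is separable or a square. Case (7) is the genuinely new one and the main obstacle: $f_A=(t\mp\sqrt q)^2(t^2-bt+q)$. If $\bar P_1\ne\bar P_2$ the two factors split off cleanly as before, reducing to separable-type arguments for each, giving (7a) and (7b). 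The subtle subcase is (7c), $\bar P_1\equiv\bar P_2\bmod\ell$: now $\bar f_A$ is a fourth power of a linear polynomial, $f_A$ is distinguished, and $f_A$ itself is \emph{not} separable (it has the double root $\pm\sqrt q$), so Theorems~\ref{main1} and~\ref{main2} do not directly apply. Here I would produce the auxiliary module $T'$ with Weil polynomial $g=f_1^3/f_A$, where $f_1=P_1\cdot(t\mp\sqrt q)$ is the minimal polynomial; so $g=P_1$, which \emph{is} separable. By Proposition~\ref{prop_mf2} the $\ell$-torsion of $A$ corresponds to a matrix factorization $(X,Y)$ with $X\equiv\diag(t^{m_1},\dots)\bmod\ell$ and $\det X=f_A$, and then $(Y,X)$ is a matrix factorization producing $T'$ with $\det Y=g=P_1$ separable; running Theorems~\ref{main1},~\ref{main2} on $T'$ and translating back via Propositions~\ref{prop_mf1},~\ref{prop_mf2} pins down exactly which Young polygons for $T$ are realizable. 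This is how I expect the dichotomy (i) vs.\ (ii) in (7c) to emerge: the two options correspond to $T$ being generated by one element (a single $3$-dimensional generalized Jordan block over the eigenvalue $\sqrt q$ together with a free $1$-dimensional piece) versus $T$ splitting as a sum with all Jordan cells of size $2$, and the congruence $\ell^2\mid P_1(\mp\sqrt q)$ is precisely the Newton-polygon condition on $g=P_1$ (namely $\Np(P_1)$ lying on or above $\Hp(1,1)$ after the relevant translation) that permits the second option; I would check the inequality $\nu(P_1(\mp\sqrt q))\ge 2$ translates to $\ell^2\mid P_1(\mp\sqrt q)$ in the ring of integers of the relevant unramified extension. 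Case (8): $f_A=(t\mp\sqrt q)^4$ is a fourth power of a linear factor, $f_1=t\mp\sqrt q$, so $T_\ell(A)$ is a free module over the integers of $\QQ_\ell(\sqrt q)$ (or $\QQ_\ell$ if $q$ is a square) on which Frobenius acts as a scalar, hence $A[\ell]=A(f_A,0)$ with no freedom at all.

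For the converse direction (for each listed $G$ there is $B$ in the isogeny class with $B[\ell]\cong G$), I would invoke Lemma~\ref{lem_on_Tate_module}: in the separable cases this is already Corollary~\ref{main_av_tors2}, and I apply Theorem~\ref{main2} (or Proposition~\ref{np_deg2}) to build the required $R$-lattice $T=\oplus T_\lambda$ inside $V_\ell(A)$ realizing the prescribed Young polygons, then lift to an abelian variety $B$. In the non-separable cases I would instead construct the target module $T$ directly from a matrix factorization: build $T'$ with Weil polynomial $g$ and the right Young polygon using Theorem~\ref{main2}, produce a matrix factorization via Proposition~\ref{prop_mf2}, swap to $(X,Y)$, take the cokernel $T$ (of rank $\deg f_A$ by Proposition~\ref{prop_mf1}), verify $F(T)\subset T$ and $T\otimes\QQ_\ell\cong V_\ell(A)$ so Lemma~\ref{lem_on_Tate_module} applies and gives $B$ with $B[\ell]\cong G$. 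The main obstacle throughout is case (7c): ensuring that the matrix-factorization correspondence really does produce \emph{all and only} the Young polygons listed, i.e.\ that no third possibility sneaks in between (i) and (ii) and that (ii) is available exactly when the stated congruence holds; this requires carefully tracking the possible $\Np(P_1)$ and the corresponding list of admissible generator-counts $r$ for $T$ (here $r\in\{1,2\}$ since $\dim\ge 4$ and each cell has dimension $\le\deg f_1\le 2$ after base change), and matching them against Proposition~\ref{np_deg2} and the degree-$3$ analysis sketched in the Example at the end of Section~\ref{mf}.
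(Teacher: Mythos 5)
Your plan matches the paper for cases (1)--(3), (5), (6) and (8): reduce via Proposition~\ref{red1}, apply Corollaries~\ref{main_av_tors1}, \ref{main_av_tors2} (and Proposition~\ref{np_deg2} for (6)), and invoke Lemma~\ref{lem_on_Tate_module} for the converse. For case (4) you propose to compute $\nu(Q_2)$ directly, whereas the paper instead observes that $Y=\Hp(1,1)$ is attainable iff $R=\ZZ_\ell[t]/f_A\ZZ_\ell[t]$ fails to be a DVR, and then applies the Dedekind criterion to translate this into the stated congruences; the two routes are equivalent, though the DVR route avoids passing to the unramified extension $S_\lambda$ and computing $Q$ explicitly, and it also transparently shows that when $R$ is not regular \emph{both} $\Hp(2)$ and $\Hp(1,1)$ occur (via $R$ itself and via its integral closure $\OO$).

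The genuine gap is in case (7c). First, your auxiliary Weil polynomial is wrong: you write $g=f_1^3/f_A$ and claim $g=P_1$. With $f_1=P_1\cdot(t\pm\sqrt q)$ and $f_A=(t\pm\sqrt q)^2 P_1$ one gets $f_1^3/f_A=P_1^2(t\pm\sqrt q)$, which has degree $5$ and is \emph{not} separable, so the strategy of reducing to a separable polynomial would fail. The correct exponent is $r=2$: a matrix factorization with $r=2$ generators gives $\det Y=f_1^2/f_A=P_1$. Second, your description of the dichotomy as ``$T$ generated by one element'' versus ``$T$ splitting into Jordan cells of size $2$'' cannot be right: $T$ has $\ZZ_\ell$-rank $4$ while $R=\ZZ_\ell[t]/f_1\ZZ_\ell[t]$ has rank $3$, so $T$ is never cyclic over $R$. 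Option (7c(i)) in fact allows $Y\in\{\Hp(3),\Hp(2,1),\Hp(1,1,1)\}$ (giving full Young polygons $\Hp(3,1),\Hp(2,1,1),\Hp(1,1,1,1)$), not merely $\Hp(3)$; the single exceptional shape is $\Hp(2,2)$, which is option (7c(ii)). Third, your sketch uses matrix factorizations for all of (7c), but they are only needed for (7c(ii)): for (7c(i)) one can directly take $T=R_1\oplus T_2$ with $R_1=\ZZ_\ell[t]/(t\pm\sqrt q)$ and $T_2$ a lattice for the \emph{separable} degree-$3$ polynomial $P_1(t)(t\pm\sqrt q)$, to which Theorem~\ref{main2} applies without any detour. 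The matrix-factorization argument (swapping $(X,Y)$ to $(Y,X)$, producing $T'$ with characteristic polynomial $P_1$ and $\bar Y\equiv\diag(t,t)$, then applying Theorems~\ref{main1},~\ref{main2}) is exactly what pins down $\Hp(2,2)$ and yields the congruence $\ell^2\mid P_1(\mp\sqrt q)$ in (7c(ii)), so your instinct about where the congruence comes from is right, but the bookkeeping ($r$, $\deg f_1=3$ not $\le 2$, the precise polygon dichotomy) needs to be corrected before this step can go through.
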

\begin{proof}
Assume that $f_A$ is separable. Note that if
$f_A(t)\equiv(t-\alpha)^3(t-\beta)\bmod\ell$, then
$\alpha\equiv\beta\bmod\ell$. Thus by
Corollaries~\ref{main_av_tors1} and~\ref{main_av_tors2} the cases
$(1)-(3)$ and $(5)$ follow. In the case $(4)$ we have
$A[\ell]\cong A(f_A, Y)$, where $\dim Y=2$, and $Y$ is the Young polygon of the zero matrix if and only if
$R=\ZZ_\ell[t]/f_A(t)\ZZ_\ell[t]$ is not a DVR. Indeed, if $R$ is
regular, then $T_\ell(A)$ is free, and hence the action of $t$ on $T_\ell(A)/\ell T_\ell(A)$ is non-trivial. 
If $R$ is not regular, then the integral closure $\OO$ of $R$ is an example
of an $R$-module such that $\OO/\ell\OO\cong A(f_A,0)(\kk)$. By the
Dedekind lemma~\cite[5.55]{ZP}, $R$ is regular if and only if
$(f_A-h^2)/\ell$ is prime to $h$ modulo $\ell$. An easy
computation shows that the two polynomials are coprime if and only
if $\ell^2$ does not divide $a_1^2-4a_2+8q$ for $\ell\neq 2$, and
$4$ does not divide $a_1+a_2+1-2q$ for $\ell=2$.

Assume now that  $f_A$ is not separable. It follows from the
classification of Weil polynomials (see~\cite{MN}), that only the
cases $(6)-(8)$ are possible. The case $(6)$ follows from the
Proposition~\ref{np_deg2}, and the case $(8)$ is obvious since
Frobenius acts as multiplication by $\mp\sqrt{q}$. By
Corollary~\ref{main_av_tors1}, the conditions of $(7a)$, $(7b)$ and
$(7c(i))$ are necessary. Let us prove that they are sufficient.
We have to construct Tate module $T$ with the prescribed Frobenius action. 
Then by Lemma~\ref{lem_on_Tate_module}, there exists an abelian variety $B$ in the isogeny class of $A$ such
that $T\cong T_\ell(B)$.

We give a construction for the case $(7b)$. Put
$T= T_1\oplus T_2$, where $T_i$ is a torsion-free module of rank $1$ over
$R_i=\ZZ_\ell[t]/P_i\ZZ_\ell[t]$. The module $T_1$ is uniquely
determined, and $T_2$ can be constructed using
Theorem~\ref{main2}. The case $(7a)$ is similar.

In the case $(7c(i))$ we construct the Tate module as the sum
$T= T_1\oplus T_2$, where $T_1$ is a module over
$R_1=\ZZ_\ell[t]/(t\pm\sqrt{q})\ZZ_\ell[t]$, and $T_2$ is a module
over $R_2=\ZZ_\ell[t]/P_1(t)(t\pm\sqrt{q})\ZZ_\ell[t]$. By
Theorem~\ref{main2}, for any $3\times 3$ nilpotent matrix $N$ such
that $\Np(P_1(t\mp\sqrt{q})t)$ lies on or above $\Hp(N)$ there exists
an $R_2$-module $T_2$ such that $t$ acts on $T_2/\ell T_2$ with
the matrix $N\mp\sqrt{q}I_3$. Then $T=R_1\oplus T_2$ is the
desired Tate module.

Suppose now that we have a module $T$ from the case $(7c(ii))$.
Let $P(t)=tP_1(t\mp\sqrt{q})$. By Proposition~\ref{prop_mf2}, there
exists a matrix factorization $(X,Y)$ such that $\det
X=f_A(t\mp\sqrt{q})$ and $YX=P(t)$. Moreover, $X\equiv
\diag(t^2,t^2)\mod\ell.$ Define $T'$ by the presentation:
\begin{equation}\label{eq_mf_2}
\ZZ_\ell[t]^2 \xrightarrow{Y} \ZZ_\ell[t]^2 \to T' \to 0,
\end{equation}

Note that $\det Y=P(t)^2/f_A(t\mp\sqrt{q})=P_1(t\mp\sqrt{q})$, and
$Y\equiv \diag(t,t)\mod\ell.$ Thus $T'$ is a module over
$R'=\ZZ_\ell[t]/P_1\ZZ_\ell[t]$. By Theorem~\ref{main1}, such a
module exists iff $\Np(P_1(t\mp\sqrt{q}))$ lies on or above the
Young polygon $\Hp(1,1)$. It follows that if $T$ exists then $\ell^2$
divides $P_1(\mp\sqrt{q})$. On the other hand if $\ell^2$ divides
$P_1(\mp\sqrt{q})$, then we can construct a module $T'$ over $R'$
such that $F$ acts on $T'/\ell T'$ with the matrix
$\mp\sqrt{q}I_2$. By Proposition~\ref{prop_mf2}, there exists a
matrix factorization $(Y,X)$ such that $\det Y=P_1(t\mp\sqrt{q})$
and $XY=P(t)$. Then the matrix factorization $(X,Y)$ corresponds
to a desired module $T$. 
\end{proof}

\section{Kummer surfaces}
Suppose $p\neq 2$. Let $A$ be an abelian surface, and let $\tau:
A\to A$ be the involution $a\mapsto -a$. Let $p_A:A\to A/\tau$ be
the quotient map. The variety $A/\tau$ is singular, and
$p_A(A[2])$ is the singular locus. Let $\sigma:S\to A/\tau$ be the blow
up of $A/\tau$ in $p_A(A[2])$. Then $S$ is smooth. It is called a {\it Kummer
surface}. In this section we compute zeta functions of Kummer
surfaces in terms of the zeta functions of the covering abelian
surfaces.

Let $X$ be a variety over a finite field $\fq$, and let $N_d$ be
the number of points of degree $1$ on $X\otimes\FF_{q^d}$. The
zeta function of $X$ is the formal power series
$$Z_X(t)=\exp(\sum_{d=1}^\infty \frac{N_dt^d}{d}).$$
In fact, $Z_X(t)$ is rational. For an
abelian variety $A$ we have the following formula:
\begin{equation}\label{zeta}
    Z_X(t)=\prod_{i=0}^{2g} P_i(t)^{(-1)^{i+1}},
\end{equation}
where $P_i(t)=\det(1-t\Fr|\bigwedge^i V_\ell(A))$. Note that if
$f_A(t)=\prod (t-\omega_j)$, then
$$P_i(t)=\prod_{j_1<\dots <j_i} (1-t\omega_{j_1}\dots\omega_{j_i}).$$
In particular, $P_1(t)=t^{2g}f_A(\frac1t)$, and $Z_A(t)=Z_B(t)$ if
and only if $A$ and $B$ are isogenous.

First we prove a general formula for the zeta function of a Kummer
surface $S$.

\begin{thm}
\label{kummer1} Let
$$Z_A(t)=\prod_{i=0}^{4} P_i(A,t)^{(-1)^{i+1}}$$
be the zeta function of an abelian surface $A$. Then
\begin{equation}\label{zeta_kummer}
    Z_S(t)=(1-t)^{-1}P(t)^{-1}(1-q^2t)^{-1},
\end{equation}
where
\begin{equation}\label{poly_kummer}
P(t)=P_2(A,t)\prod_{a\in A[2]}(1-(qt)^{\deg a}).
\end{equation}
In particular
\begin{equation}\label{number_of_points}
    |S(k)|=\frac{f_A(1)+f_A(-1)}{2}+q|A[2](k)|
\end{equation}
\end{thm}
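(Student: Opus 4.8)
The plan is to compute the numbers $N_n=|S(\FF_{q^n})|$ for all $n\geq1$ by elementary point-counting, and then read off both \eqref{zeta_kummer} and \eqref{number_of_points} from the definition $Z_S(t)=\exp\!\bigl(\sum_{n\geq1}N_nt^n/n\bigr)$. Since $p\neq2$, the subscheme $A[2]$ is finite \'etale of order $16$ and $\tau$ acts trivially on it, so $p_A$ restricts to an isomorphism $A[2]\xrightarrow{\sim}p_A(A[2])$; moreover $A/\tau$ has exactly the sixteen rational double points $p_A(A[2])$, and the resolution $\sigma$ replaces each of them by a single exceptional curve, which is a smooth genus-$0$ curve over its finite field of definition and hence a projective line there. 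Stratifying $S$ into the open subset $\sigma^{-1}\bigl((A/\tau)\setminus p_A(A[2])\bigr)\cong(A/\tau)\setminus p_A(A[2])$ and the exceptional locus, and counting over $\FF_{q^n}$ fibrewise — a closed point of $p_A(A[2])$ of degree $d\mid n$ supplies $d$ points of $(A/\tau)(\FF_{q^n})$ but $d(q^n+1)$ points of the resolved surface — one obtains
$$
N_n=|(A/\tau)(\FF_{q^n})|+q^n\,|A[2](\FF_{q^n})|.
$$

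Next I compute $|(A/\tau)(\FF_{q^n})|$ by describing it as the set of $\Fr^n$-stable $\langle\tau\rangle$-orbits in $A(\kk)$. Such an orbit has one of three shapes: a singleton $\{x\}$ with $x\in A[2]$ and $\Fr^n x=x$; a pair $\{x,-x\}$ with $x\notin A[2]$ and $\Fr^n x=x$; or a pair $\{x,-x\}$ with $x\notin A[2]$ and $\Fr^n x=-x$, i.e. $x$ in the kernel of the isogeny $1+\Fr^n$ of $A$. There are $|A[2](\FF_{q^n})|$ orbits of the first kind, $\tfrac12\bigl(|A(\FF_{q^n})|-|A[2](\FF_{q^n})|\bigr)$ of the second, and — since $\ker(1+\Fr^n)\cap A[2]=A[2](\FF_{q^n})$ — $\tfrac12\bigl(|\ker(1+\Fr^n)|-|A[2](\FF_{q^n})|\bigr)$ of the third, whence
$$
|(A/\tau)(\FF_{q^n})|=\tfrac12\bigl(|A(\FF_{q^n})|+|\ker(1+\Fr^n)|\bigr)=\tfrac12\Bigl(\prod_j(1-\omega_j^n)+\prod_j(1+\omega_j^n)\Bigr),
$$
where $\omega_1,\dots,\omega_4$ are the roots of $f_A$. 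Expanding the two products and using $\omega_1\omega_2\omega_3\omega_4=q^2$ turns the right-hand side into $1+\sum_{i<j}(\omega_i\omega_j)^n+q^{2n}$; for $n=1$ it equals $\tfrac12\bigl(f_A(1)+f_A(-1)\bigr)$, and together with $N_1=|S(k)|$ and $|A[2](\FF_q)|=|A[2](k)|$ this already gives \eqref{number_of_points}.

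To obtain the zeta function, substitute $N_n=1+\sum_{i<j}(\omega_i\omega_j)^n+q^{2n}+q^n|A[2](\FF_{q^n})|$ into $\log Z_S(t)=\sum_{n\geq1}N_nt^n/n$. The first three terms exponentiate, respectively, to $(1-t)^{-1}$, to $\prod_{i<j}(1-\omega_i\omega_jt)^{-1}=P_2(A,t)^{-1}$, and to $(1-q^2t)^{-1}$. For the last term, write $|A[2](\FF_{q^n})|=\sum_{\deg c\mid n}\deg c$, the sum running over closed points $c$ of $A[2]$, and reorganize
$$
\sum_{n\geq1}\frac{q^n|A[2](\FF_{q^n})|}{n}\,t^n=\sum_{c}\deg c\sum_{m\geq1}\frac{(qt)^{m\deg c}}{m\,\deg c}=-\sum_{c}\log\bigl(1-(qt)^{\deg c}\bigr),
$$
which exponentiates to $\prod_{c}\bigl(1-(qt)^{\deg c}\bigr)^{-1}$. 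Multiplying the four factors gives $Z_S(t)=(1-t)^{-1}P(t)^{-1}(1-q^2t)^{-1}$ with $P(t)=P_2(A,t)\prod_{c}\bigl(1-(qt)^{\deg c}\bigr)$, which is \eqref{zeta_kummer}--\eqref{poly_kummer}, the product in \eqref{poly_kummer} being over the closed points of $A[2]$.

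The delicate step is the orbit enumeration in the second paragraph, and specifically the third type of orbit: a closed point of $A/\tau$ of degree $n$ can arise from a single closed point of $A$ of degree $2n$ whose two $\kk$-points are interchanged by $\Fr^n$, and counting these correctly — while disentangling the overlaps of the three types with $A[2]$ — is exactly what yields the clean expression $\tfrac12\bigl(|A(\FF_{q^n})|+|\ker(1+\Fr^n)|\bigr)$. The geometric facts used in the first paragraph (that $A[2]$ is \'etale and $p_A(A[2])$ reduced, that $A/\tau$ has precisely these sixteen $A_1$-singularities each resolved by a single rational curve, and that every exceptional curve has a rational point over its field of definition) are standard, but they are where the hypothesis $p\neq2$ really enters. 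As an independent check one can argue cohomologically: $H^i_{\et}(S_{\kk},\QQ_\ell)$ is $\QQ_\ell$ for $i=0$, vanishes for $i=1,3$, is $\QQ_\ell(-2)$ for $i=4$, and for $i=2$ is $H^2_{\et}(A_{\kk},\QQ_\ell)\oplus\bigoplus_{c}\QQ_\ell(-1)[E_c]$ with $\Fr$ acting on $H^2_{\et}(A_{\kk},\QQ_\ell)$ with characteristic polynomial $P_2(A,t)$ and permuting the sixteen classes $[E_c]$ exactly as it permutes the points of $A[2]$ while Tate-twisting; formula \eqref{zeta} then reproduces \eqref{zeta_kummer}--\eqref{poly_kummer}.
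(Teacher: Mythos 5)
Your proof is correct and takes essentially the same route as the paper's: both stratify $S$ into the smooth locus of $A/\tau$ and the exceptional curves, count $(A/\tau)(\FF_{q^n})$ by enumerating $\Fr^n$-stable $\tau$-orbits to get $\tfrac12\bigl(f_n(1)+f_n(-1)\bigr)$, and then exponentiate. The only presentational difference is that the paper first states the blow-up identity $Z_{A/\tau}(t)=Z_S(t)\prod_a(1-(qt)^{\deg a})$ and then computes $Z_{A/\tau}$, while you work directly with $N_n=|S(\FF_{q^n})|$ and split the logarithm at the end, spelling out the exceptional-curve point count that the paper leaves implicit.
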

\begin{proof}
Since $S$ is the blow up of $X$, we have $$Z_X(t)=Z_S(t)\prod_{a\in
A[2]}(1-(qt)^{\deg a}).$$ Let us prove that
$$|X(\FF_{q^r})|=\frac{f_r(1)+f_r(-1)}{2},$$
where $f_r=\det(t-\Fr^r)$ is the Weil polynomial of
$A_r=A\otimes\FF_{q^r}$. Put $$A(r)=A_r[2](\FF_{q^r}).$$ 
By~\cite[IV.19.4]{Mum}, $f_A(n)=\deg(n-\Fr)$ for $n\in\ZZ$, were $\deg$ means
the degree of an isogeny. There are two types of
possible fibers of the map $p_A$ over a nonsingular
$\FF_{q^r}$-point of $X$.
\begin{enumerate}
    \item The fiber is a union of two points of degree $1$. There are
    $\frac{f_r(1)-A(r)}{2}$ such fibers.
    \item The fiber is a point of degree $2$. There are
    $\frac{f_r(-1)-A(r)}{2}$ such fibers.
\end{enumerate}
This gives the desired equality.

Let $f_r(t)=t^4+a_1(r)t^3+a_2(r)t^2+a_1(r)q^rt+q^{2r}$, then
$a_2(r)=\tr(\Fr^r|H^2(\Bar A,\QQ_\ell))$, and
\begin{gather*}
Z_X(t)=\exp(\sum_{r=1}^\infty \frac{(f_r(1)+f_r(-1))t^r}{2r})=\\
\exp(\sum_{r=1}^\infty \frac{t^r}{r})
\exp(\sum_{r=1}^\infty \frac{a_2(r) t^r}{r})
\exp(\sum_{r=1}^\infty \frac{q^{2r} t^r}{r})=\\
(1-t)^{-1}P_2(A,t)^{-1}(1-q^2t)^{-1}
\end{gather*}
The last equality follows from lemma~C.4.1 of~\cite{Ha}.
\end{proof}

Now we classify the zeta functions of $A[2]$ in terms of the Weil
polynomial $f_A$. Let $b_r$ be the number of points of degree $r$
on $A[2]$. Then $P(t)=P_2(A,t)\prod_r(1-(qt)^r)^{b_r}$. We compute
the numbers $b_r$ using Theorem~\ref{surf_tors}.

Suppose first that $f_A$ is separable, and assume that
$f_A(t)\equiv(t+1)^4\bmod 2$. Note that the slopes of
$\Np(f_A(t+1))$ may be greater than $1$. This may create many
unnecessary cases in the table below. However, we can use the
polynomial $f(t)=f_A(t+\lambda)$ instead of $f_A(t+1)$, where
$\lambda\equiv 1\bmod\ell$, satisfying the property that slopes of
$\Np(f(t))$ are less then or equal to $1$. Equivalently, we take
$\Np(f_A(t+1))$ and change all its slopes that are greater than
$1$ to $1$. This operation simplifies the notation, and clearly,
it does not change the final answer, since all the slopes of Young
polygons are not greater than $1$.

\begin{center}
\begin{longtable}{|c|c|}
\caption{}\label{kummer_table1}\\
\hline
Slopes of $\Np(f(t))$  & $b_i$ \\
\hline \hline
$(1/4)$ & $b_1=2,b_2=1,b_4=3$\\
\hline
$(1/3,1)$ & $b_1=2,b_2=1,b_4=3$\\
    &$b_1=4,b_2=2,b_4=2$\\
\hline
$(1/2,1/2)$ & $b_1=2,b_2=1,b_4=3$\\
    &$b_1=4,b_2=2,b_4=2$\\
    &$b_1=4,b_2=6$\\
\hline
$(2/3,1)$,$(1/2,1,1)$ & $b_1=2,b_2=1,b_4=3$\\
or $(3/4)$ &$b_1=4,b_2=2,b_4=2$\\
    &$b_1=4,b_2=6$\\
    &$b_1=8,b_2=4$\\
\hline
$(1,1,1,1)$ & $b_1=2,b_2=1,b_4=3$\\
    &$b_1=4,b_2=2,b_4=2$\\
    &$b_1=4,b_2=6$\\
    &$b_1=8,b_2=4$\\
    &$b_1=16$\\
\hline
\end{longtable}
\end{center}

If $f_A(t)\not\equiv(t+1)^4\bmod 2$, then

\begin{center}
\begin{longtable}{|c|c|}
\caption{}\label{kummer_table2}\\
\hline
$f_A(t)\bmod 2$& \\
\hline
\hline
$t^4+t^3+t^2+t+1$       &$b_1=1, b_5=3$\\
\hline
$t^4+t^3+t+1$           &$b_1=2,b_2=1,b_3=2,b_6=1$\\
and $4$ does not divide $f_A(1)$ &\\
\hline
$t^4+t^3+t+1$           &$b_1=2,b_2=1,b_3=2,b_6=1$\\
and $4$ divides $f_A(1)$    &$b_1=4,b_3=4$\\
\hline
$t^4+t^2+1$             &$b_1=1,b_3=1,b_6=2$ \\
and $4$ does not divide $a_1+a_2+1-2q$ & \\
\hline
$t^4+t^2+1$             &$b_1=1,b_3=5$ \\
and $4$ divides $a_1+a_2+1-2q$ &$b_1=1,b_3=1,b_6=2$ \\
\hline
\end{longtable}
\end{center}

If $f_A$ is not separable, we have three cases of
theorem~\ref{surf_tors}. Let $f_A(t)=P_A(t)^2$ then

\begin{center}
\begin{longtable}{|c|c|}
\caption{}\label{kummer_table3}\\
\hline
$P_A(t)\bmod 2$& \\
\hline \hline
$t^2+t+1$               &$b_1=1, b_3=5$\\
\hline
$t^2+1$ and $4$ does not divide $P_A(1)$ &$b_1=4,b_2=6$\\
\hline
$t^2+1$             &$b_1=4,b_2=6$\\
and $4$ divides $P_A(1)$    &$b_1=8,b_2=4$\\
            &$b_1=16$\\
\hline
\end{longtable}
\end{center}

If $f_A(t)=(t\pm\sqrt{q})f(t)$, then
\begin{center}
\begin{longtable}{|c|c|}
\caption{}\label{kummer_table4}\\
\hline
$f(t)\bmod 2$& \\
\hline \hline
$t^2+t+1$               &$b_1=4, b_3=4$\\
\hline
$t^2+1$             &$b_1=8,b_2=4$\\
and $4$ does not divide $f(1)$ &$b_1=4,b_2=2,b_3=2$\\
\hline
$t^2+1$                &$b_1=16$\\
and $4$ divides $f(1)$  &$b_1=8,b_2=4$\\
           &$b_1=4,b_2=6$\\
         &$b_1=4,b_2=2,b_3=2$\\
\hline
\end{longtable}
\end{center}

Finally, if $f_A(t)=(t\pm\sqrt{q})^4$, we have $b_1=16$.

\end{document}